\documentclass[11pt,twoside,reqno]{amsart}
\allowdisplaybreaks
\usepackage{amsmath,amstext,amssymb,epsfig}
\usepackage{graphicx}
\usepackage{mathrsfs}
\textwidth 18 cm
\textheight 23 cm
\oddsidemargin 0.2cm
\evensidemargin 1.2cm
\calclayout
\setcounter{page}{1}
\makeatletter
\renewcommand{\@seccntformat}[1]{\bf\csname the#1\endcsname.}
\renewcommand{\section}{\@startsection{section}{1}
	\z@{.7\linespacing\@plus\linespacing}{.5\linespacing}
	{\normalfont\upshape\bfseries\centering}}
%\@addtoreset{thm}{section} \@addtoreset{rem}{section}
\renewcommand{\@biblabel}[1]{\@ifnotempty{#1}{#1.}}
\makeatother
\theoremstyle{plain}
\newtheorem{thm}{Theorem}[section]
\newtheorem{lem}[thm]{Lemma}
\newtheorem{prop}[thm]{Proposition}
\newtheorem{cor}[thm]{Corollary}

\theoremstyle{definition}

\newtheorem{defn}[thm]{Definition}
\newtheorem{nt}[thm]{Note}
\newtheorem{rem}{Remark}[section]

\usepackage{cancel}
\usepackage[parfill]{parskip}
\usepackage[german]{varioref}
\usepackage[all]{xy}
\usepackage{color}
%\usepackage{url}
%\usepackage{amsmath}
%\usepackage{dcolumn}
%\setcounter{tocdepth}{4}
%\usepackage{tikz}
% \usetikzlibrary{shapes, arrows}
%\usetikzlibrary{automata}

\def \>{\succ}
\def \<{\prec}

\begin{document}	
	\title[M. A. Fiidow\textsuperscript{1}, Ahmed Zahari \textsuperscript{2}, Bouzid Mosbahi \textsuperscript{3}]{  Quasi-Centroids and Quasi-Derivations of\\ Low Dimensional Associative Algebras}
	\author{ M. A. Fiidow\textsuperscript{1}, Ahmed Zahari \textsuperscript{2}, Bouzid Mosbahi \textsuperscript{3}}
\address{\textsuperscript{1}Department of Mathematics, Faculty of Science, Somali National University.}
		\address{\textsuperscript{2}Universit\'{e} de Haute Alsace, IRIMAS-D\'{e}partement de Math\'{e}matiques, 18, rue des Fr\`eres Lumi\`ere F-68093 Mulhouse, France.}
\address{\textsuperscript{3}University of Sfax, Faculty of Sciences of Sfax,  BP 1171, 3000 Sfax, Tunisia.}
	%%%%
\email{\textsuperscript{1}m.fiidow@snu.edu.so}
\email{\textsuperscript{2}zaharymaths@gmail.com}
\email{\textsuperscript{3}bouzidmosbahy@gmail.com}
	%%%%%%%%%%%%%%%%%%%%%%%%%%%%%%%%%%%%%%%%%%%%%%%%%%%%%%%%%%%%%%%%%%%%%%%%%%%%%%%
	%%%%%%%%%%%%%%%%%%%%%%%%%%%%%%%%%%%%%%%%%%%%%%%%%%%%%%%%%%%%%%%%%%%%%%%%%%%%%
	
	\keywords{ Associative algebras,centralizer, quasi-derivation, quasi-centroid}
	\subjclass[2010]{16 D70}
	
	%[class=AMS]
	%%%%%%%%%%%%%%%%%%%%%%%%%%%%%%%%%%%%%%%%%%%%%%%%%%%%%%%%%%%%%%%%%%%%%%%%%%%%%
	%%%%%%%%%%%%%%%%%%%%%%%%%%%%%%%%%%%%%%%%%%%%%%%%%%%%%%%%%%%%%%%%%%%%%%%%%%
	\date{\today}

	%%%%%%%%%%%%%%%%%%%%%%%%%%%%%%%%%%%%%%%%%%%%%%%%%%%%%%%%%%%%%%%%%%%%%%%%%
	%%%%%%%%%%%%%%%%%%%%%%%%%%%%%%%%%%%%%%%%%%%%%%%%%%%%%%%%%%%%%%%%%%%%%%%%%
\begin{abstract}  In this paper, we present some basic properties concerning the quasi-derivation algebra $QDer(\mathcal{A})$ and the quasi-centroid algebra $QC(\mathcal{A})$ of associative algebra $\mathcal{A}$. Furthermore,
using the result on classification of two, three and four dimensional associative algebra, we compute, for all two, three and four dimensional associative algebras, quasi-derivation algebra $QDer(\mathcal{A})$ and the quasi-centroid algebra $QC(\mathcal{A})$  algebras and give their corresponding dimension.
\end{abstract}

\maketitle
\section{Introduction}
The classification of associative algebra is a long-standing and recurring problem. A possible investigator \cite{peirc}oversaw the initial investigation. Since then, numerous fascinating discoveries on the issue have been made.\\
Further works  in this field can be found in  \cite{hazlett} (nilpotent algebras of dimension $\leq 4$ over $\mathbb{C}$), \cite{mazzola1979}- associative unitary algebras of dimension five over algebraic closed fields of characteristic not two, \cite{mazzola1980}- nilpotent commutative associative algebras of dimension $\leq 5$, over algebraic closed fields of characteristic not two, three and  recently, \cite{poonen2008}- nilpotent commutative associative algebras of dimension $\leq 5$, over algebraically closed fields and \cite{de} classify nilpotent associative algebras of dimensions $\leq 3$ over any field, and four-dimensional commutative nilpotent associative algebras over finite fields and over $\mathbb{R}$.\\

Derivation and generalised derivation algebras are essential topics in the study of Lie algebras and Lie superalgebras, as is well known. Centroids, quasi-centroids, generalised derivations, and derivation algebras of nilpotent Lie algebras all play essential roles in studying Levi factors \cite{Benoist}. Melville's work focused on the centroids of nilpotent Lie algebras \cite{Melville}.
Leger and Luks \cite{ZRY} conducted the most significant and methodical study on the generalised derivation algebras of a Lie algebra and associated Lie subalgebras. The quasi-derivation algebras and centroids, two friendly features of the generalised derivation algebras and their subalgebras. They looked into the generalised derivation algebras' structure and described Lie algebras that met specific criteria. They also noted, and the readers are directed to, that there are specific linkages between quasi-derivations and the cohomology of Lie algebras.\\
%%%%%%%%%%%%%%%%%%%%%%%%%%%%%%%%%%%%%%%%%%%%%%%%%%%%%%%%%%%%%%%%%%%%%%%%%%%%%%%%%
The current investigation focuses on the two, three and four-dimensional associative algebras quasi-derivation algebra $QDer(\mathcal{A})$ and quasi-centroid algebra $QC(\mathcal{A})$ using the classification of low dimensional  associative algebras listed in \cite{RRB}.
\section{Preliminaries}

\begin{defn}\label{dia}
An associative algebras is a $2$-tuple $(\mathcal{A}, \bullet)$ consisting of a  linear space $\mathcal{A}$  linear maps
 $\bullet : \mathcal{A}\times \mathcal{A} \longrightarrow \mathcal{A}$  satisfying, for all $x, y, z\in \mathcal{A}$ the following
 conditions :
\begin{eqnarray}
(x\bullet y)\bullet z&=&x\bullet(y\bullet z),\label{eq1}
\end{eqnarray}
\end{defn}
In an associative algebra we consider the right $R_{x},$ and the left $L_{x},$ multiplication operators defined as follows:
\begin{eqnarray}
R_{x}(y) = y\cdot x \\
 L_{x}(y) = x\cdot y
\end{eqnarray}
\begin{defn}
 A derivation of associative algebra $\mathcal{A}$ is a linear transformation \\$d:\mathcal{A} \rightarrow \mathcal{A}$ where
\begin{equation}d(x\cdot y)=d(x)y+xd(y) \quad\quad    \forall x,y\in \mathcal{A}\end{equation}
\end{defn}
We denote the set of all derivations of an associative algebra $\mathcal{A}$ by $Der(\mathcal{A}).$ The $Der(\mathcal{A})$ is an associative algebra with respect the composition operation $\circ$ and it is a Lie algebra with respect to the bracket $[d_1, d_2]=d_1\circ d_2 - d_2\circ d_1.$

In the foregoing, we give a few earlier results on some properties of the derivations
 of associative algebras. The proof of some of the facts given below can be found in \cite{FRS}.
\begin{prop}
Let $(\mathcal{A}, \cdot)$ be an associative algebra and $d\in Hom(\mathcal{A})$, then the following conditions are equivalent:\\
$$i. \ d\in Der(\mathcal{A})  \quad ii. \ [d, L_{x}]=L_{d(x)}  \quad \text{and} \quad  iii. \ [d, R_{x}]=R_{d(x)}$$
\end{prop}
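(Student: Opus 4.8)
The plan is to verify each equivalence by evaluating the operator identities on an arbitrary element $y\in\mathcal{A}$ and rewriting everything in terms of the product $\cdot$; each condition then collapses to the Leibniz rule after a trivial rearrangement. Concretely, I would establish the cycle (i) $\Leftrightarrow$ (ii) and (i) $\Leftrightarrow$ (iii), so that all three statements become equivalent.

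First, for (i) $\Leftrightarrow$ (ii): using $[d,L_x]=d\circ L_x-L_x\circ d$ and $L_x(y)=x\cdot y$, I compute, for all $x,y\in\mathcal{A}$,
\[
[d,L_x](y)=d(x\cdot y)-x\cdot d(y),
\]
whereas $L_{d(x)}(y)=d(x)\cdot y$. Hence $[d,L_x]=L_{d(x)}$ holds for every $x$ if and only if $d(x\cdot y)=d(x)\cdot y+x\cdot d(y)$ for all $x,y\in\mathcal{A}$, which is precisely the definition of $d\in Der(\mathcal{A})$.

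Next, for (i) $\Leftrightarrow$ (iii): similarly, with $R_x(y)=y\cdot x$,
\[
[d,R_x](y)=d(y\cdot x)-d(y)\cdot x,
\]
while $R_{d(x)}(y)=y\cdot d(x)$. Therefore $[d,R_x]=R_{d(x)}$ for every $x$ if and only if $d(y\cdot x)=d(y)\cdot x+y\cdot d(x)$ for all $x,y\in\mathcal{A}$, which, after relabeling $x\leftrightarrow y$, is again the derivation identity. This closes the argument.

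There is no real obstacle here: the statement is a direct unwinding of definitions, and associativity of $\mathcal{A}$ is in fact not needed for it. The only point requiring a little care is keeping track of the side on which multiplication acts, i.e.\ not confusing $R_x$ (multiplication by $x$ on the right) with $L_x$. Accordingly, I would spell out the computation once for $L_x$ and then remark that the $R_x$ case is entirely symmetric.
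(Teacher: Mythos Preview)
Your argument is correct and is exactly the standard direct verification one would expect: evaluate both sides on an arbitrary $y$ and observe that each operator identity is literally the Leibniz rule rearranged. Note, however, that the paper does not actually supply its own proof of this proposition; it is stated among ``earlier results'' with the proofs deferred to \cite{FRS}, so there is nothing in the paper to compare against beyond confirming that your computation matches the definitions given there for $L_x$, $R_x$, and $Der(\mathcal{A})$.
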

\begin{thm}
If $d$ is a derivation of an associative algebra $\mathcal{A}$, then $d$ is a derivation of $\mathcal{A}$ as a lie algebra.
\end{thm}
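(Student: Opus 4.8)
The plan is to work directly from the definitions. Recall that an associative algebra $(\mathcal{A},\cdot)$ carries a canonical Lie algebra structure given by the commutator bracket $[x,y]:=x\cdot y-y\cdot x$; antisymmetry is immediate and the Jacobi identity is a standard consequence of \eqref{eq1}, so there is nothing to check there. The claim to be proved is therefore: if $d\in Der(\mathcal{A})$ with respect to $\cdot$, then $d$ satisfies the Leibniz rule $d([x,y])=[d(x),y]+[x,d(y)]$ for all $x,y\in\mathcal{A}$.

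First I would fix $x,y\in\mathcal{A}$ and expand $d([x,y])$ using linearity of $d$, writing $d([x,y])=d(x\cdot y)-d(y\cdot x)$. Next I would apply the associative derivation identity from the definition of $Der(\mathcal{A})$ to each of the two products separately, obtaining $d(x\cdot y)=d(x)\cdot y+x\cdot d(y)$ and $d(y\cdot x)=d(y)\cdot x+y\cdot d(x)$. Substituting these in and regrouping the four resulting terms as $\bigl(d(x)\cdot y-y\cdot d(x)\bigr)+\bigl(x\cdot d(y)-d(y)\cdot x\bigr)$ exhibits exactly $[d(x),y]+[x,d(y)]$, which is the desired identity.

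There is no real obstacle here: the argument is a one-line rearrangement once the bracket is written out, and it uses only linearity of $d$ together with the defining property of an associative derivation. The only point worth stating explicitly for the reader is the convention that "$\mathcal{A}$ as a Lie algebra" means $\mathcal{A}$ equipped with the commutator bracket; after that, the computation is forced. If desired, one could also phrase the proof operator-theoretically via the preceding proposition (using $[d,L_x]=L_{d(x)}$ and $[d,R_x]=R_{d(x)}$), but the direct term-by-term computation is the cleanest route and is the one I would present.
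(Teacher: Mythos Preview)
Your argument is correct: expanding $d([x,y])=d(x\cdot y)-d(y\cdot x)$, applying the Leibniz rule to each product, and regrouping yields $[d(x),y]+[x,d(y)]$ as you describe. The paper does not actually supply its own proof of this theorem; it lists the statement among ``earlier results'' and refers the reader to \cite{FRS}, so there is nothing to compare against beyond noting that your direct computation is the standard one.
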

\begin{lem} The sets $R(\mathcal{A})=\{R_{x} | x\in \mathcal{A} \}$, $L(\mathcal{A})=\{L_{x} | x\in \mathcal{A} \}$
 are subalgebras of the associative algebra $Der(A).$
\end{lem}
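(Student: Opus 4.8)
The plan is to verify, directly from the axioms, the two conditions defining a subalgebra of the ambient associative algebra of linear operators on $\mathcal{A}$: that $L(\mathcal{A})$ and $R(\mathcal{A})$ are linear subspaces, and that each is closed under the composition product $\circ$. For the first condition I would note that the maps $x \mapsto L_x$ and $x \mapsto R_x$ from $\mathcal{A}$ into $End(\mathcal{A})$ are linear, since $L_{\alpha x + \beta y} = \alpha L_x + \beta L_y$ and $R_{\alpha x + \beta y} = \alpha R_x + \beta R_y$ are immediate from the bilinearity of the product; hence $L(\mathcal{A})$ and $R(\mathcal{A})$, being the images of these linear maps, are subspaces.

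The substantive point is closure under $\circ$, and this is exactly where associativity \eqref{eq1} enters. I would evaluate on an arbitrary $z \in \mathcal{A}$: $(L_x \circ L_y)(z) = L_x(y \cdot z) = x \cdot (y \cdot z) = (x \cdot y) \cdot z = L_{x \cdot y}(z)$, so $L_x \circ L_y = L_{x \cdot y} \in L(\mathcal{A})$; and symmetrically $(R_x \circ R_y)(z) = R_x(z \cdot y) = (z \cdot y) \cdot x = z \cdot (y \cdot x) = R_{y \cdot x}(z)$, so $R_x \circ R_y = R_{y \cdot x} \in R(\mathcal{A})$ (with the order reversal recording that $x \mapsto R_x$ is an anti-homomorphism). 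Combined with the subspace property, this gives the claim.

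I do not anticipate any real obstacle: the argument is a two-line computation resting only on bilinearity and associativity. The one point worth stating carefully is the choice of ambient algebra. A typical $L_x$ or $R_x$ is \emph{not} itself a derivation, so the natural ambient associative algebra in which these sit as subalgebras is $End(\mathcal{A})$ (equivalently $Hom(\mathcal{A})$, with composition), rather than $Der(\mathcal{A})$ literally. If one wants the link to $Der(\mathcal{A})$, the relevant fact — provable by the same short manipulation with \eqref{eq1} — is that the inner maps $x \mapsto L_x - R_x$ do take values in $Der(\mathcal{A})$; and for the Lie-algebra analogue the computations above yield $[L_x, L_y] = L_{x \cdot y - y \cdot x}$ and $[R_x, R_y] = -\,R_{x \cdot y - y \cdot x}$, so that $L(\mathcal{A})$ and $R(\mathcal{A})$ are in addition Lie subalgebras of $\mathfrak{gl}(\mathcal{A})$.
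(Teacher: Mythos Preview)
Your argument is correct: linearity of $x\mapsto L_x$ and $x\mapsto R_x$ gives the subspace property, and associativity gives $L_x\circ L_y=L_{x\cdot y}$ and $R_x\circ R_y=R_{y\cdot x}$, whence closure under composition. The paper does not supply its own proof of this lemma; it defers the elementary facts in this block to the reference \cite{FRS}, so there is no in-paper argument to compare against, and your direct verification is exactly what one would expect.

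Your caveat about the ambient algebra is well placed and worth keeping. As you observe, a generic $L_x$ fails the Leibniz rule (one has $L_x(yz)=L_x(y)\cdot z$ rather than $L_x(y)\cdot z + y\cdot L_x(z)$), so $L(\mathcal{A})$ and $R(\mathcal{A})$ sit as subalgebras of $(End(\mathcal{A}),\circ)$, not of $Der(\mathcal{A})$ as the lemma is literally phrased; and $Der(\mathcal{A})$ is not itself closed under $\circ$, so the phrase ``the associative algebra $Der(\mathcal{A})$'' is already inaccurate as stated. Your remark that the genuine link to derivations goes through the inner maps $L_x-R_x$ is the right repair.
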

\begin{defn}
Let $\mathcal{A}$ be an arbitrary associative algebra over a field $\mathbb{K}.$ Then
\begin{center}
$\Lambda (\mathcal{A}) = \Big\{\phi\in End(\mathcal{A})|\phi(xy)=\phi(x)y= x\phi(y)\quad \forall x,y\in \mathcal{A}\Big\}$
\end{center}
is called centroid of $\mathcal{A}$
\end{defn}
\begin{nt}
Let $I$ be a nonempty subset of $\mathcal{A}.$ The subset  $$Z_{\mathcal{A}}(I)= \{x\in \mathcal{A}| xI= Ix =0\}$$ is said to be the
centralizer of $I$ in $\mathcal{A}$. In particular, if $I$ is an ideal of $\mathcal{A}$, then so is $Z_{\mathcal{A}}(I).$ In particular, if $I= \mathcal{A},$ write $Z_{\mathcal{A}}(I)= Z(\mathcal{A} ).$
\end{nt}
%\newpage

\begin{defn}
If $QC(\mathcal{A})=\Big\{\phi\in End(\mathcal{A})|\phi(x)\ast y=x\ast\phi(y) \Big\}$ for all $x$ and $y\in \mathcal{A}$, then $C(\mathcal{A})$ is called quasi-centroids of $\mathcal{A}$.
\end{defn}
\begin{defn}
If $ZDer(\mathcal{A})=\Big\{\phi\in End(\mathcal{A})|\phi(x)\ast y=x\ast\phi(y)=0 \Big\}$ for all $x$ and $y\in \mathcal{A}$, then $ZDer(\mathcal{A})$ is called central derivation of $\mathcal{A}$.
\end{defn}
\begin{defn}
If \, $\phi\in End(\mathcal{A})$ is said to be a quasi-derivation, if there exist $\phi^{'}\in End(\mathcal{A})$, such that:
$$\phi(x)y+x \phi(y)=\phi^{'}(xy)$$
\end{defn}
%\noindent On the other hand , $\Lambda(\mathcal{A})$ is closed under composition and thus has associative algebra structure.
\begin{defn}
Let $\mathcal{A}$ be an associative algebra. We say that $\mathcal{A}$ is indecomposable if it can not be written as a direct sum of its ideals. Otherwise the $\mathcal{A}$ is called decomposable.
\end{defn}
\begin{defn}
Let $\mathcal{A}$ be an indecomposable associative  algebra. We say $\Lambda(\mathcal{A})$ is small if $\Lambda$ is generated by central derivations and the scalars.
\end{defn}
 The quasi-centroid of a decomposable associative algebra is  small if the quasi-centroids of each indecomposable factors are small.\\
 \section{Some properties of Quasi-Centroids and Quasi-Derivations of associative algebras}
 First we give some properties of center derivation algebras, quasi derivations algebras and quasi-centroid of associative algebras.
 \begin{lem} Let $\mathcal{A}$ be an associative algebra. Then\\
\begin{itemize}
  \item $Der(\mathcal{A})C(\mathcal{A})\subset C(\mathcal{A})$
  \item $[QDer(\mathcal{A}), QC(\mathcal{A})]\subset QC(\mathcal{A})$
  \item $[QC(\mathcal{A}), QC(\mathcal{A})]\subset QDer(\mathcal{A})$
  \item $C(\mathcal{A})\subset QDer(\mathcal{A})$
\end{itemize}
 \end{lem}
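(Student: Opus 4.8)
The plan is to verify each of the four inclusions directly from the defining identities, using the characterization of derivations in terms of the multiplication operators (Proposition stated above: $d\in Der(\mathcal{A})$ iff $[d,L_x]=L_{d(x)}$ and $[d,R_x]=R_{d(x)}$) together with the elementary defining conditions for $C(\mathcal{A})$, $QC(\mathcal{A})$ and $QDer(\mathcal{A})$. For the first inclusion, let $d\in Der(\mathcal{A})$ and $\phi\in C(\mathcal{A})$; I would compute $(d\circ\phi)(xy)$ by first using $\phi(xy)=\phi(x)y$ and then applying the Leibniz rule for $d$, and separately use $\phi(xy)=x\phi(y)$ to get the other half, so that after the dust settles $d\circ\phi$ satisfies $\psi(xy)=\psi(x)y=x\psi(y)$ and hence lies in $C(\mathcal{A})$. (One should be slightly careful about whether the product $Der(\mathcal{A})C(\mathcal{A})$ is meant as composition in one order or the set of all composites; I would phrase it as: for every $d\in Der(\mathcal{A})$ and $\phi\in C(\mathcal{A})$, $d\circ\phi\in C(\mathcal{A})$.)

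For the fourth inclusion, $C(\mathcal{A})\subset QDer(\mathcal{A})$, the argument is immediate: if $\phi\in C(\mathcal{A})$ then $\phi(x)y+x\phi(y)=\phi(x)y+\phi(xy)$, and since $\phi(x)y=\phi(xy)$ this equals $2\phi(xy)$, so taking $\phi'=2\phi$ exhibits $\phi$ as a quasi-derivation. For the second inclusion, take $D\in QDer(\mathcal{A})$ with associate $D'$ and $\phi\in QC(\mathcal{A})$; I would expand $[D,\phi](x)\ast y=\big(D\phi(x)-\phi D(x)\big)\ast y$, push each term through the two defining identities (the quasi-derivation identity for $D$ applied to suitable products, and $\phi(u)\ast v=u\ast\phi(v)$ for $\phi$), and collect terms until the expression is visibly symmetric in $x$ and $y$ up to the same rearrangement on the right, which is exactly the condition $[D,\phi](x)\ast y=x\ast[D,\phi](y)$, i.e. $[D,\phi]\in QC(\mathcal{A})$. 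The third inclusion is the most involved: given $\phi,\psi\in QC(\mathcal{A})$, one must produce $\theta'$ with $[\phi,\psi](x)y+x[\phi,\psi](y)=\theta'(xy)$. The natural candidate is $\theta'=[\phi,\psi]$ acting suitably, or more precisely one writes out $(\phi\psi-\psi\phi)(x)\cdot y$, repeatedly swaps multiplications across $\phi$ and $\psi$ using the quasi-centroid identity, and checks that the cross terms $\phi(x)\psi(y)$ etc. cancel in the commutator, leaving something of the form $\theta'(xy)$; I expect the correct associate to emerge as $[\phi,\psi]$ itself or a scalar multiple thereof.

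The main obstacle, and the step I would spend the most care on, is the third inclusion: unlike the Lie-algebra setting where antisymmetry of the bracket produces the needed cancellations almost for free, here the associative product is not antisymmetric, so one must track the order of factors precisely and confirm that the "mixed" terms $\phi(x)\psi(y)$ and $\psi(x)\phi(y)$ genuinely cancel when one forms $\phi\psi-\psi\phi$ rather than merely rearrange. I would do this by writing $\phi(x)\psi(y)$ in two ways via the quasi-centroid identity — as $x\cdot\phi\psi(y)$ grouping on one side and as $\phi\psi(x)\cdot y$-type terms grouping on the other after inserting associativity \eqref{eq1} — and then subtracting the $\psi\phi$ version. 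Throughout, associativity \eqref{eq1} is used silently to regroup triple products, and this is the only structural fact about $\mathcal{A}$ that enters; none of the four statements needs any finite-dimensionality or field hypothesis.
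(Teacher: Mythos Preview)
Your handling of the second, third, and fourth items is fine and matches the spirit of the paper's proof, which simply says ``the conclusions can be easily obtained by the definitions of quasi-derivation and quasi-centroid''. In particular, for the third item your expectation is correct: when you expand $[\phi,\psi](x)y + x[\phi,\psi](y)$ using $\phi(u)v = u\phi(v)$ and $\psi(u)v = u\psi(v)$ repeatedly, every term cancels and the associate is simply $\theta'=0$, so $[\phi,\psi]\in QDer(\mathcal{A})$.

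There is, however, a genuine gap in your treatment of the first item. You interpret $Der(\mathcal{A})C(\mathcal{A})$ as composition and assert that $d\circ\phi$ lands in $C(\mathcal{A})$, but this is false. Carrying out exactly the computation you describe gives
\[
(d\circ\phi)(xy)=d(\phi(x)y)=d\phi(x)\cdot y+\phi(x)\cdot d(y),
\]
and the extra term $\phi(x)d(y)$ does not vanish in general; the dust does not settle. A decisive counterexample: take $\phi=\mathrm{id}_{\mathcal{A}}\in C(\mathcal{A})$ and any $d\in Der(\mathcal{A})\setminus C(\mathcal{A})$; then $d\circ\phi=d\notin C(\mathcal{A})$. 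The notation $Der(\mathcal{A})C(\mathcal{A})$ here (as in the Leger--Luks and Zhang--Zhang references the paper cites) should be read as the Lie bracket $[Der(\mathcal{A}),C(\mathcal{A})]$ inside $End(\mathcal{A})$. With that reading the computation does work: the two extra terms $\phi(x)d(y)$ and $\phi(xd(y))=x\phi d(y)$ produced by $d\phi$ and $\phi d$ respectively cancel (using $\phi(x)d(y)=x\phi d(y)$ from the centroid identity), leaving $[d,\phi](xy)=[d,\phi](x)\cdot y$, and symmetrically on the other side. So the fix is to replace composition by the commutator and redo the one-line calculation.
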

\begin{proof}
The conclusions can be easily obtained by the definitions of quasi-derivation and quasi-centroid.
\end{proof}
\begin{prop}
If $\mathcal{A}$ is an associative algebra, then $[C(\mathcal{A}),\ QC(\mathcal{A})]\subseteq End(\mathcal{A},\ Z(\mathcal{A}))$. Moreover, if $Z(\mathcal{A})=0$, then $[C(\mathcal{A}),\ QC(\mathcal{A})]= 0$.
\end{prop}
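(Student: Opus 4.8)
The plan is to take an arbitrary $\phi \in C(\mathcal{A})$ and $\psi \in QC(\mathcal{A})$, form the commutator $[\phi,\psi] = \phi\circ\psi - \psi\circ\phi$, and show that its image lands in $Z(\mathcal{A})$, i.e.\ that $[\phi,\psi](z)\cdot y = y\cdot[\phi,\psi](z) = 0$ for all $y,z \in \mathcal{A}$. First I would write out the two defining identities: since $\phi \in C(\mathcal{A})$ we have $\phi(xy) = \phi(x)y = x\phi(y)$, and since $\psi \in QC(\mathcal{A})$ we have $\psi(x)\cdot y = x\cdot\psi(y)$. The computation then proceeds by expanding $([\phi,\psi](x))\cdot y$: we get $\phi(\psi(x))\cdot y - \psi(\phi(x))\cdot y$. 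For the first term, use the centroid property of $\phi$ to pull it inside: $\phi(\psi(x))\cdot y = \phi(\psi(x)\cdot y)$. For the second term, use the quasi-centroid property of $\psi$ to get $\psi(\phi(x))\cdot y = \phi(x)\cdot\psi(y)$, and then the centroid property of $\phi$ again to write $\phi(x)\cdot\psi(y) = \phi(x\cdot\psi(y))$.

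The key step is then to reconcile these: $\phi(\psi(x)\cdot y)$ versus $\phi(x\cdot\psi(y))$. Since $\psi \in QC(\mathcal{A})$ gives $\psi(x)\cdot y = x\cdot\psi(y)$ directly, these two arguments of $\phi$ are equal, so $([\phi,\psi](x))\cdot y = \phi(\psi(x)\cdot y) - \phi(x\cdot\psi(y)) = 0$. A symmetric computation, multiplying on the left by $y$ and using the left-hand sides of the centroid identity ($y\phi(w) = \phi(yw)$) together with the quasi-centroid identity, yields $y\cdot([\phi,\psi](x)) = 0$. Together these say $[\phi,\psi](x) \in Z(\mathcal{A})$ for every $x$, which is exactly the assertion $[C(\mathcal{A}),QC(\mathcal{A})] \subseteq \mathrm{End}(\mathcal{A},Z(\mathcal{A}))$.

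The "moreover" part is then immediate: if $Z(\mathcal{A}) = 0$, then any endomorphism with image contained in $Z(\mathcal{A})$ is the zero map, so $[C(\mathcal{A}),QC(\mathcal{A})] = 0$.

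I expect the only real subtlety to be bookkeeping: making sure each application of the centroid identity uses the correct side ($\phi(x)y$ versus $x\phi(y)$) so that the quasi-centroid identity $\psi(x)y = x\psi(y)$ can be applied cleanly, and being careful that the notation $\mathrm{End}(\mathcal{A},Z(\mathcal{A}))$ means linear maps $\mathcal{A}\to\mathcal{A}$ with image in $Z(\mathcal{A})$. There is no deep obstacle here — the statement follows by a direct two-line manipulation from the definitions, much in the spirit of the preceding lemma whose proof the authors dispatch with "by the definitions."
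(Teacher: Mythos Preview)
Your proposal is correct and follows essentially the same computation as the paper: expand $[\phi,\psi](x)\cdot y$, use the centroid identity to pull $\phi$ outside, use the quasi-centroid identity on the second term, and observe that the two remaining expressions coincide because $\psi(x)\cdot y = x\cdot\psi(y)$. If anything you are slightly more careful than the paper, since you explicitly mention the symmetric left-multiplication check needed to land in $Z(\mathcal{A})$, which the paper leaves implicit.
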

\begin{proof}
Assume that $\phi_1\in C(\mathcal{A}),\ \phi_2\in QC(\mathcal{A})$\textit{ }and for all $x,\ y\ \in \ \mathcal{A},$ we have:\textit{}
\[[\phi_1, \phi_2](x)\cdot y\ =\ \phi_1\ \phi_2(x)\cdot y-\phi_2\ \phi_1(x)\cdot y\]
\[=\ \phi_1(\phi_2(x)\cdot y)- \phi_1(x)\cdot\phi_2(y)\]
\[=\ \phi_1(\phi_2(x)\cdot y-x\cdot\phi_2(y))=0.\]
Hence, $[\phi_1,\ \phi_2](x)\in Z(\mathcal{A})$ and $[\phi_1,\ \phi_2]\in  End(A,\ Z(\mathcal{A}))$ as desired. Furthermore, if $Z(\mathcal{A})\ =\ \{0\}$, it is clear that$\ [C(\mathcal{A}),\ QC(\mathcal{A})]= \{0\}.$
\end{proof}

\begin{thm}
Let $\mathcal{A}$ be an associative algebra and $I$ be $\Lambda(\mathcal{A})$ invariant ideal of $\mathcal{A}$.
\begin{itemize}
\item[$1.$]$T(I)$ is a subspace of $W$ isomorphic to $V(I)$.
\item[$2.$] If $\Lambda(I)= \mathbb{K}id_{I}$ then $\Lambda(\mathcal{A})= \mathbb{K}id_{I}\oplus V(I)$ as a vector space.
\end{itemize}
\end{thm}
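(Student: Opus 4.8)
Before sketching the argument I recall the notation from the setup: $W$ denotes the space $\operatorname{Hom}\!\big(\mathcal{A}/I,\ Z_{\mathcal{A}}(I)\big)$ of linear maps from the quotient $\mathcal{A}/I$ into the centralizer of $I$, the space $V(I)$ consists of those $\phi\in\Lambda(\mathcal{A})$ with $\phi(I)=0$, and $T(I)$ is the image of $V(I)$ in $W$ under the canonical \emph{pass-to-the-quotient} assignment $\phi\mapsto\bar\phi$. The whole proof runs off the restriction map
\[
\rho:\ \Lambda(\mathcal{A})\ \longrightarrow\ \Lambda(I),\qquad \rho(\phi)=\phi|_{I}.
\]
The first thing I would check is that $\rho$ is well defined, and this is exactly where the hypothesis that $I$ be $\Lambda(\mathcal{A})$-invariant enters: for $\phi\in\Lambda(\mathcal{A})$ we get $\phi(I)\subseteq I$, and then the centroid identity $\phi(xy)=\phi(x)y=x\phi(y)$ read for $x,y\in I$ shows $\phi|_{I}\in\Lambda(I)$. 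Linearity (and multiplicativity) of $\rho$ is immediate, and by definition $\ker\rho$ is precisely the space of centroid elements vanishing on $I$.

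For part (1) I would first note that every $\phi\in V(I)$ has image inside $Z_{\mathcal{A}}(I)$: given $a\in\mathcal{A}$ and $y\in I$, the products $ay$ and $ya$ lie in $I$, so $\phi(a)y=\phi(ay)=0$ and $y\phi(a)=\phi(ya)=0$; hence $\phi(\mathcal{A})\subseteq Z_{\mathcal{A}}(I)$. As $\phi$ also kills $I$, it factors through $\mathcal{A}/I$ and induces $\bar\phi\in W$. The map $T:\phi\mapsto\bar\phi$ is linear; it is injective because an endomorphism of $\mathcal{A}$ killing $I$ is determined by the map it induces on $\mathcal{A}/I$; and its image is $T(I)$ by definition. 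Hence $T$ is a linear isomorphism of $V(I)$ onto the subspace $T(I)\subseteq W$, which is assertion (1). (Should the source instead define $T(I)$ intrinsically inside $W$ rather than as $T\big(V(I)\big)$, the only extra point is to check surjectivity of $T$ onto it.)

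For part (2), assume $\Lambda(I)=\mathbb{K}\,\mathrm{id}_{I}$. Since $\mathrm{id}_{\mathcal{A}}\in\Lambda(\mathcal{A})$ and $\rho(\mathrm{id}_{\mathcal{A}})=\mathrm{id}_{I}$, the map $\rho$ is onto $\mathbb{K}\,\mathrm{id}_{I}$, so $\Lambda(\mathcal{A})/\ker\rho=\Lambda(\mathcal{A})/V(I)$ is one-dimensional. This gives $\Lambda(\mathcal{A})=\mathbb{K}\,\mathrm{id}_{\mathcal{A}}+V(I)$, and the sum is direct: if $\lambda\,\mathrm{id}_{\mathcal{A}}\in V(I)$ then restricting to $I$ yields $\lambda\,\mathrm{id}_{I}=0$, so $\lambda=0$ (this uses $I\neq 0$, tacitly assumed). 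Therefore $\Lambda(\mathcal{A})=\mathbb{K}\,\mathrm{id}_{\mathcal{A}}\oplus V(I)$ as vector spaces, which is (2) — where $\mathbb{K}\,\mathrm{id}_{I}$ in the statement is to be read as $\mathbb{K}\,\mathrm{id}_{\mathcal{A}}$, the scalar multiples of the identity of $\mathcal{A}$, and $V(I)$ is identified with $\ker\rho$ via the isomorphism of part (1).

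I expect the real work to be organizational rather than mathematical: one has to fix the precise conventions for $W$, $V(I)$ and $T(I)$ used in the cited classification setup and verify that, under those conventions, $\ker\rho$ is literally $V(I)$ (or literally $T(I)$, depending on which side of the isomorphism the source puts the "concrete" subspace) so that (1) and (2) dovetail. After that, the proof is just: the single well-definedness check for $\rho$ (the sole use of $\Lambda(\mathcal{A})$-invariance of $I$), a one-line dimension count in part (2), and the trivial observation $\mathbb{K}\,\mathrm{id}_{\mathcal{A}}\cap V(I)=0$ for $I\neq0$.
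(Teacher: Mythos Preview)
Your proposal is correct and follows essentially the same route as the paper: for part~(1) you build the pass-to-quotient map $\phi\mapsto\bar\phi$ from $V(I)$ into $W=\operatorname{Hom}(\mathcal{A}/I,Z_{\mathcal{A}}(I))$ and check it is a linear isomorphism onto $T(I)$ (the paper also verifies surjectivity explicitly by lifting each $f\in T(I)$ to $\phi_f(x)=f(\bar x)$, which covers your parenthetical case); for part~(2) both you and the paper restrict an arbitrary $\phi\in\Lambda(\mathcal{A})$ to $I$, pick off the resulting scalar $\lambda$, and write $\phi=\lambda\,\mathrm{id}_{\mathcal{A}}+(\phi-\lambda\,\mathrm{id}_{\mathcal{A}})\in\mathbb{K}\,\mathrm{id}_{\mathcal{A}}\oplus V(I)$. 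Your packaging via the restriction homomorphism $\rho$ is slightly tidier, but the mathematics is the same.
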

\begin{proof}
$(1)$. It is easy to  see  that $V(I)$ is an ideal of the associative algebra $\Lambda(\mathcal{A}).$ To prove $(1)$ consider the following map $\alpha: V(I)\rightarrow T(I),$ given by\\
 $$\alpha(\phi)(\bar{y})= \phi(y),$$ where $\phi\in V(I)$ and  $\bar{y}= y+I\in \mathcal{A}/I.$\\
The map $\alpha$ is well defined. Indeed, for if $\bar{y}= \bar{y_1}$, then $y-y_{1}\in I$ and so $\phi(y-y_{1})= 0.$ i.e $\phi(y)= \phi(y_1).$\\
It follows easily  that $\alpha$ is an injective. If $\alpha\phi(\bar{y})= \alpha\phi_{1}(\bar{y})$ for  $\phi, \phi_{1}\in \Lambda(\mathcal{A})$
and for any $y\in \mathcal{A}$, namely, $\phi(y)= \phi_{1}(y),$ then $\phi= \phi_{1}$. \\
We now show that $\alpha$ is onto. For every $f\in T(I)$, set $\phi_{f}: \mathcal{A}\rightarrow \mathcal{A}$, $\phi_{f}(x)= f(\bar{x})$, $\forall$ $x\in \mathcal{A}$ it follows from the definition of $\phi_{f}(xy)= f(\overline{xy})= f(\bar{x}\bar{y})=f(\bar{x})  \bar{y}= \bar {x}f(\bar {y})$ for all $x, y\in \mathcal{A}$ namely, $\phi_{f}(xy)= \phi_{f}(x)y= x\phi_{f}(y).$  Thus  $\phi_{f}\in \Lambda(\mathcal{A})$ and  so $\phi_{f}\in V(I)$ since $(\phi_{f})(I)=0$. But  $\alpha(\phi_{f})=f$ implies that $\alpha$ is onto. Its  fairly easy to see that $\alpha$ preserves operations on vector space from $\mathcal{A}/I$ to $Z_{\mathcal{A}}(I)$is linear. Thus $\alpha$ is an isomorphism of vector spaces.\\

Let us now prove part $(2)$. If $\Lambda(I)= \mathbb{K}id_{I}$, then for all   $\phi\in \Lambda(\mathcal{A}),$ $\psi|_{I}=  \lambda id_I,$ for some $\lambda\in \mathbb{K}.$ If $\phi\neq \lambda id_{\mathcal{A}}$, let $\psi(x)= \lambda x$, for all $x\in \mathcal{A},$ then $\psi\in\Lambda(\mathcal{A})$, $\phi- \psi\in V(I).$ Clearly, $\phi= \psi+ (\phi-\psi).$ Furthermore, $\mathbb{K}id_{\mathcal{A}}\cap V(I)=0$ so $\phi(\mathcal{A})= \mathbb{K}id_{\mathcal{A}}\oplus V(I).$
\end{proof}
%%%%%%%%%%%%%%%%%%%%%%%%%%%%%%%%%%%%%%%%%%%%%%%%%%%%%%%%%%%%%%%%%%%%%%%%%%%
\noindent

%%%%%%%%%%%%%%%%%%%%%%%%%%%%%%%%%%%%%%%%%%%%%%%%%%%%%%%%%%%%%%%%%%%%%%%%%%%%%%%%%%%%
\section{Description of Quasi-Centroid of low-dimensional  associative algebras}\label{rr}
In this section describes the quasi-centroid  of  associative algebra with low dimensional over the field $\mathbb{F}.$
Let $\left\{e_1,e_2, e_3,\cdots, e_n\right\}$ be a basis of an $n$-dimensional  associative algebras $\mathcal{A}.$ The product of basis is expressed in terms of structure constants as follows
\begin{eqnarray}\label{qp}
\phi(e_i)=\sum_{j=1}^na_{ji}e_j\nonumber.
\end{eqnarray}

\begin{eqnarray}\label{eqQC}
\sum_{p=1}^na_{pi}\delta_{pj}^q=\sum_{p=1}^na_{pj}\delta_{ip}^q.
\end{eqnarray}
It is observed that if the structure constants $\{\gamma_{ij}^{k}\}$ of a associative algebra $\mathcal{A}$ are given then in order to describe its quasi-centroid one has to solve the system of equations above with respect to $a_{ij}$ $i,j=1,2,\cdots,n.$
\section{Description of Quasi-Centroids of two and three dimensional associative algebras}
This section is devoted to the description of the quasi-centroid of three-dimensional complex  associative algebras.  Here we make use of the  algorithm for finding the quasi-centroid, given in Section \ref{rr}. Here we  use  classification results of two and three-dimensional complex associative algebras from \cite{RRB}

\begin{thm}
The \textbf{Quasi-Centroid} of two dimensional complex  associative algebras are given as follows:
\end{thm}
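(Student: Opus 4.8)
The plan is to carry out, case by case, the computation outlined in Section \ref{rr}: fix the classification list of two-dimensional complex associative algebras from \cite{RRB}, and for each algebra in the list write down a general endomorphism $\phi$ in the basis $\{e_1,e_2\}$ as $\phi(e_i)=\sum_{j=1}^{2}a_{ji}e_j$, i.e. a $2\times 2$ matrix $(a_{ji})$. Then impose the defining condition of the quasi-centroid, $\phi(e_i)\ast e_j = e_i\ast\phi(e_j)$ for all $i,j\in\{1,2\}$, which by bilinearity reduces to the finite system \eqref{eqQC} in the four unknowns $a_{11},a_{12},a_{21},a_{22}$. Solving this linear system for each structure-constant datum $\{\gamma_{ij}^k\}$ yields the subspace $QC(\mathcal{A})\subseteq\mathrm{End}(\mathcal{A})$, which I would then present as the span of explicit matrices together with its dimension.

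Concretely, I would first recall the classification: up to isomorphism the two-dimensional complex associative algebras are the handful of algebras (the zero-product algebra, $\mathbb{C}\oplus\mathbb{C}$, $\mathbb{C}[x]/(x^2)$, the algebra with $e_1^2=e_1$, $e_1 e_2 = e_2$ or its variants, etc.), and I would list each with its multiplication table. For each table I substitute the products $e_i\ast e_j$ into $\phi(e_i)\ast e_j = e_i\ast\phi(e_j)$, expand using the structure constants, and collect coefficients of each basis vector $e_q$; each such coefficient gives one scalar equation of the form $\sum_p a_{pi}\gamma_{pj}^{q}=\sum_p a_{pj}\gamma_{ip}^{q}$. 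The resulting small linear systems are then solved by hand.

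The step I expect to be the only real obstacle is bookkeeping rather than conceptual depth: making sure the classification list from \cite{RRB} is used in exactly the normalized form stated there (so that the structure constants are unambiguous), and being careful with the left/right asymmetry of the product when forming $\phi(e_i)\ast e_j$ versus $e_i\ast\phi(e_j)$ — since the algebras need not be commutative, the two sides genuinely differ and sign/index errors are easy to make. A secondary point is presenting the answer in an invariant-looking way: once the solution space is found I would exhibit a basis of $QC(\mathcal{A})$ consisting of explicit matrices (or operators written in terms of $L_x$, $R_x$, and $\mathrm{id}$ where possible) and record $\dim QC(\mathcal{A})$.

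Finally, I would organize the output as a table or an enumerated list matching the algebras of \cite{RRB} one-to-one, so that each line reads "for the algebra $\mathcal{A}_k$ with multiplication $\dots$, the quasi-centroid is $QC(\mathcal{A}_k)=\mathrm{span}\{\dots\}$ with $\dim QC(\mathcal{A}_k)=\dots$", and I would double-check each case by verifying that the claimed matrices actually satisfy $\phi(e_i)\ast e_j=e_i\ast\phi(e_j)$ directly. Since the whole computation is finite and elementary, no further machinery beyond the algorithm of Section \ref{rr} and the classification of \cite{RRB} is needed.
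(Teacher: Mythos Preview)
Your proposal is correct and follows essentially the same approach as the paper: both reduce the computation to solving the linear system \eqref{eqQC} in the entries of a generic $2\times 2$ matrix for each algebra in the classification list of \cite{RRB}, then record the resulting subspace and its dimension. The paper's proof in fact only writes out the case $\mathcal{A}s_2^1$ explicitly and leaves the remaining cases implicit, so your plan to carry out and verify each case individually is, if anything, more complete.
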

%\begin{table}[htbp]
%\caption{Centroids of two-dimensional associative algebras}
\begin{center}
\small
\begin{tabular}{|c|c|c||c|c|c|c|c|} \hline
\textbf{$IC$} & \textbf{$QC(\mathcal{A})$} & \textbf{$Dim$} & \textbf{$IC$} & \textbf{$QC(\mathcal{A})$} & \textbf{$Dim$}\\
%& $QC(\mathcal{A})$ & & &$QC(\mathcal{A})$& \\
\hline
$\mathcal{A}s^{1}_{2}$&$\left(%
\begin{array}{cc}
  a_{11}&0\\
a_{21}&a_{22} \\
\end{array}%
\right)$&3&

$\mathcal{A}s^{2}_{2}$&$\left(%
\begin{array}{cc}
  a_{11} & 0 \\
  0 & a_{11} \\
\end{array}%
\right)$&1\\ \hline

$\mathcal{A}s^{3}_{2}$&$\left(%
\begin{array}{cc}
   a_{11} & 0 \\
  0 & a_{11} \\
\end{array}%
\right)$&1&

$\mathcal{A}s^{4}_{2}$&$\left(%
\begin{array}{cc}
  a_{11} & 0 \\
  0 & a_{22} \\
\end{array}%
\right)$&2\\
\hline

$\mathcal{A}s^{5}_{2}$&$\left(%
\begin{array}{cc}
   a_{11} & 0 \\
  a_{21}& a_{11} \\
\end{array}%
\right)$&2& & & \\
\hline
\end{tabular}
\end{center}
%\end{table}
\begin{proof}
Consider $\mathcal{A}s_{2}^{1}$ from \cite{RRB}. The structure constants of $\mathcal{A}s_{2}^{1}$ are $\gamma_{11}^{2}=1$ and the others are zeros. Solving the system of equation (\ref{eqQC}), we have $a_{12}=0$ and $a_{22}=a_{11}.$\\
Therefore we obtain the centroids of $\mathcal{A}s_{2}^{1}$ in matrix form as follows $$QC(\mathcal{A})= \left\{\left(%
\begin{array}{cc}
  a_{11} & 0 \\
  a_{21} & a_{22} \\
\end{array}%
\right)| a_{11}, a_{21}\in\mathbb{C} \right\}.$$
\end{proof}

\begin{thm}
The \textbf{Quasi-Centroid} of three dimensional complex  associative algebras are given as follows:
\end{thm}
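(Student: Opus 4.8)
The plan is to carry out, for each of the isomorphism classes $\mathcal{A}s^{i}_{3}$ in the classification of three-dimensional complex associative algebras from \cite{RRB}, the same computation used in the two-dimensional case: write a generic $\phi\in \mathrm{End}(\mathcal{A})$ as a $3\times 3$ matrix $(a_{ji})$ with respect to the fixed basis $\{e_1,e_2,e_3\}$, substitute the structure constants $\gamma^{k}_{ij}$ of the algebra in question into the defining system \eqref{eqQC}, and solve the resulting homogeneous linear system in the unknowns $a_{ij}$. The solution space is $QC(\mathcal{A})$, and its dimension is read off as the number of free parameters; both are recorded in the table. Concretely, for each algebra I would first list the nonzero $\gamma^{k}_{ij}$, then expand the defining identity $\phi(x)\ast y = x\ast \phi(y)$ on all pairs of basis vectors $(e_i,e_j)$, collect the coefficient of each $e_q$, and thereby obtain one scalar equation per triple $(i,j,q)$ with a nonzero contribution.

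The key steps, in order, are: (i) fix notation and recall that \eqref{eqQC} is precisely the coordinate form of $\phi(e_i)\ast e_j = e_i\ast \phi(e_j)$; (ii) go through the list of three-dimensional algebras $\mathcal{A}s^{1}_{3},\dots$ one at a time; (iii) for each, plug in its structure constants, simplify, and exhibit the general solution matrix; (iv) count parameters to get the dimension; (v) assemble the results into the stated table. As in the printed two-dimensional proof, it suffices to display the computation in full for one representative case and remark that the others are entirely analogous, since the method is uniform and purely linear-algebraic. I would also use the decomposability remark from the preliminaries where convenient: if $\mathcal{A} = \mathcal{A}_1 \oplus \mathcal{A}_2$ as ideals, then $QC(\mathcal{A})$ splits compatibly, so the quasi-centroids of the decomposable three-dimensional algebras can be obtained from the one- and two-dimensional lists already in hand, reducing the genuinely new work to the indecomposable cases.

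The main obstacle is not conceptual but bookkeeping: the number of three-dimensional classes is larger than in dimension two, and for the less sparse algebras the system \eqref{eqQC} produces many equations coupling the nine entries $a_{ij}$, so one must be careful to reduce the system correctly and not to miss solutions (for instance, distinguishing cases according to whether the characteristic or certain structure constants vanish, if the classification in \cite{RRB} has parametrized families). The risk of an arithmetic slip in solving these linear systems is the real danger, and the safeguard is to verify each claimed solution matrix by substituting it back into $\phi(e_i)\ast e_j = e_i\ast\phi(e_j)$ for all $i,j$ before entering it in the table.
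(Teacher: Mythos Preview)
Your proposal is correct and mirrors the paper's own proof: the paper likewise invokes the classification in \cite{RRB}, writes out the system \eqref{eqQC} for one representative case ($\mathcal{A}s_{3}^{1}$), solves it to exhibit the matrix form of $QC(\mathcal{A})$, and states that the remaining classes are handled identically. Your added remark about exploiting decomposability to recycle the lower-dimensional results is a sensible shortcut but not something the paper makes explicit.
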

%\begin{table}[htbp]
\begin{center}
\small
\begin{tabular}{|c|c|c||c|c|c|c|c|} \hline
\textbf{$IC$}&\textbf{$QC(\mathcal{A})$}&
\textbf{$Dim$}&\textbf{$IC$}&\textbf{$QC(\mathcal{A})$}&\textbf{$Dim$}\\
\hline
$\mathcal{A}s^{1}_{3}$&$\left(%
\begin{array}{ccc}
  a_{11}&0&a_{13}\\
a_{21}&a_{22}&a_{23}\\
a_{31}&0&a_{11}
\end{array}%
\right)$ & 6&
$\mathcal{A}s^{2}_{3}$&$\left(%
\begin{array}{ccc}
   a_{11}&0&0\\
a_{21}&a_{22}&a_{23}\\
0&0&a_{11} \\
\end{array}%
\right)$&$4$\\
\hline
$\mathcal{A}s^{3}_{3}$&$\left(%
\begin{array}{ccc}
   a_{11}&0&0\\
a_{21}&a_{11}&0\\
a_{31}&a_{32}&a_{33}\\
\end{array}%
\right)$&$5$&
$\mathcal{A}s^{4}_{3}$ &$\left(%
\begin{array}{ccc}
   a_{11}&a_{12}&a_{13}\\
a_{21}&g-a_{12}&-a_{13}\\
0&0&g
\end{array}%
\right)$&$4$\\
\hline
$\mathcal{A}s^{5}_{3}$ &$\left(%
\begin{array}{ccc}
  a_{11} & 0 & 0 \\
  0 & a_{11} & 0 \\
  0 & 0 & a_{11} \\
\end{array}%
\right)$&$1$&
$\mathcal{A}s^{6}_{3}$ &$\left(%
\begin{array}{ccc}
  a_{11}&a_{12}&a_{13}\\
a_{21}&g-a_{12}&-a_{13}\\
0&0&g
\end{array}%
\right)$&$4$\\
\hline
$\mathcal{A}s^{7}_{3}$&$\left(%
\begin{array}{ccc}
  a_{11}&0&0\\
0&a_{22}&0\\
0&0&a_{11}
\end{array}%
\right)$&$2$&
$\mathcal{A}s^{8}_{3}$ &$\left(%
\begin{array}{ccc}
  a_{11}&0&a_{13}\\
0&a_{11}&0\\
0&0&a_{11}
\end{array}%
\right)$&$2$\\
 \hline
 $\mathcal{A}s^{9}_{3}$ &$\left(%
\begin{array}{ccc}
  a_{11}&0&0\\
0&a_{11}&a_{23}\\
0&0&a_{11}\\
\end{array}%
\right)$&$2$&
$\mathcal{A}s^{10}_{3}$ &$\left(%
\begin{array}{ccc}
   a_{11}&0&a_{13}\\
0&a_{11}&a_{23}\\
0&0&a_{11}\\
\end{array}%
\right)$&$3$\\
\hline
$\mathcal{A}s^{11}_{3}$ &$\left(%
\begin{array}{ccc}
  a_{11}&a_{12}&a_{13}\\
a_{21}&g-a_{12}&a_{23}\\
0&0&a_{11}+a_{21}\\
\end{array}%
\right)$ & $5$&
$\mathcal{A}s^{12}_{3}$ & $\left(%
\begin{array}{ccc}
   a_{11}&0&a_{13}\\
a_{13}&a_{11}&a_{23}\\
0&0&a_{11}\\
\end{array}%
\right)$ & $3$\\
 \hline
 $\mathcal{A}s^{13}_{3}$ &$\left(%
\begin{array}{ccc}
  a_{11}&0&0\\
0&a_{22}&0\\
0&0&-a_{33}\\
\end{array}%
\right)$ &$3$&
$\mathcal{A}s^{14}_{3}$ &$\left(%
\begin{array}{ccc}
  a_{11}&a_{12}&0\\
0&a_{11}&0\\
0&0&a_{33} \\
\end{array}%
\right)$&$3$\\
\hline
$\mathcal{A}s^{15}_{3}$ &$\left(%
\begin{array}{ccc}
 a_{11}&0&0\\
0&a_{11}&0\\
0&0&a_{33} \\
\end{array}%
\right)$&$2$&
$\mathcal{A}^{16}_{3}$ &$\left(%
\begin{array}{ccc}
a_{11}&0&0\\
0&a_{11}&0\\
0&0&a_{33}\\
\end{array}%
\right)$&$2$\\
\hline
$\mathcal{A}^{17}_{3}$ &$\left(%
\begin{array}{ccc}
a_{11}&0&0\\
a_{21}&a_{22}&a_{23}\\
0&0&a_{33} \\
\end{array}%
\right)$&$5$& & & \\
\hline
\end{tabular}
\end{center}
%\end{table}
where $g=a_{11}+ a_{21}$.\\
\begin{proof}
Consider Theorem \cite{RRB} which give the classifications of three-dimensional associative algebras. It is clear form  (\ref{eqQC})that the system of equations can applied to compute all the quasi-centroid of three-dimensional algebras.
The class $\mathcal{A}s_{3}^{1}$ has the structure constants as follows $\gamma_{13}^{2}=1, \gamma_{31}^{2}=1$ . By using condition (\ref{eqQC}), we have $a_{12}=a_{32}=0$ and $a_{33}=a_{11}.$
Therefore we obtain the quasi-centroid of $\mathcal{A}s_{3}^{1}$ in matrix form as follows
$$QC(\mathcal{A})= \left\{\left(%
\begin{array}{ccc}
  a_{11} & 0 & a_{13} \\
  a_{21} & a_{22} & a_{23} \\
  0 & 0 & a_{11} \\
\end{array}%
\right)| a_{11},  a_{21}, a_{23}\in\mathbb{C}\right\}.$$
\end{proof}

\section{Description of Quasi-Centroid  of four dimensional associative algebras}
In this section we make use the result\cite{RRB} on classification of four-dimensional associative algebras and present quasi-centroid of algebras.
%\newpage
\begin{thm}\label{theoQC2}
The \textbf{Quasi-Centroid}  of $4$-dimensional  associative algebras have the
following form
\end{thm}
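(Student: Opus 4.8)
The statement is computational: it asserts that for each of the four-dimensional associative algebras $\mathcal{A}$ in the classification list of \cite{RRB}, the quasi-centroid $QC(\mathcal{A})$ has the prescribed matrix form (and the tabulated dimension). The plan is to proceed exactly as in the two- and three-dimensional cases treated above, namely to run, algebra by algebra, the algorithm of Section \ref{rr}: fix the basis $\{e_1,e_2,e_3,e_4\}$, read off the structure constants $\gamma_{ij}^k$ of the given representative from \cite{RRB}, substitute them into the defining system \eqref{eqQC}
\[
\sum_{p=1}^{4} a_{pi}\,\gamma_{pj}^{q}=\sum_{p=1}^{4} a_{pj}\,\gamma_{ip}^{q},
\qquad 1\le i,j,q\le 4,
\]
and solve the resulting homogeneous linear system in the $16$ unknowns $a_{ij}$. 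The general solution, written as a $4\times 4$ matrix $\phi=(a_{ij})$, is the claimed description of $QC(\mathcal{A})$, and the number of free parameters is the tabulated dimension.

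First I would recall (or re-derive) that $\phi\in QC(\mathcal{A})$ if and only if $\phi(x)\ast y=x\ast\phi(y)$ for all $x,y$, and that testing this on basis elements $x=e_i$, $y=e_j$ and extracting the coefficient of $e_q$ gives precisely equation \eqref{eqQC}; this reduces each case to linear algebra over $\mathbb{F}$ (or $\mathbb{C}$). Then for each indecomposable algebra $\mathcal{A}s^{k}_{4}$ in the list I would: (i) tabulate the nonzero $\gamma_{ij}^k$; (ii) expand \eqref{eqQC} into its nontrivial scalar equations — most of the $64$ equations are $0=0$ because the structure constants are sparse; (iii) solve for the dependent entries $a_{ij}$ in terms of the free ones; (iv) record the resulting matrix and count parameters. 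I would organize the presentation by displaying, for one representative class (as done above for $\mathcal{A}s^1_2$ and $\mathcal{A}s^1_3$), the full computation, and then assert that the remaining classes are handled by the same routine substitution, with the outcomes collected in the table.

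The main obstacle is not conceptual but bookkeeping: four-dimensional associative algebras are numerous (the list in \cite{RRB} has many indecomposable classes plus decomposable ones built from lower-dimensional pieces), several representatives carry a continuous parameter in their structure constants, and one must be careful that the solution of \eqref{eqQC} is stated uniformly across parameter values (degenerations may enlarge $QC(\mathcal{A})$). For the decomposable algebras I would invoke the remark after the definition of smallness together with the block structure: if $\mathcal{A}=\mathcal{A}_1\oplus\mathcal{A}_2$ as ideals, then $QC(\mathcal{A})$ contains $QC(\mathcal{A}_1)\oplus QC(\mathcal{A}_2)$ as block-diagonal maps, plus possibly cross terms landing in annihilators, so those cases reduce to the two- and three-dimensional tables already established. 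The genuinely new work is therefore confined to the indecomposable four-dimensional representatives, and for each of them the computation terminates quickly because \eqref{eqQC} is a sparse linear system; the proof consists of carrying this out and reading off the entries, exactly as illustrated for the lower-dimensional cases.
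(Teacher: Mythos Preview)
Your proposal is correct and follows essentially the same approach as the paper: the paper's proof simply applies the system \eqref{eqQC} to the representative $\mathcal{A}s_4^{1}$, records the resulting constraints $a_{12}=a_{13}=a_{14}=a_{21}=a_{23}=a_{24}=0$, displays the matrix form, and then states that the remaining four-dimensional classes are handled similarly. Your additional remarks on parametric families and on the block structure for decomposable algebras go beyond what the paper's proof explicitly addresses, but they do not change the method.
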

%\begin{table}[ht!]
  \centering
\begin{tabular}{||c||c||c||c||c||c||c||c||c||c||c||c||}
\hline
\textbf{$IC$}&\textbf{$QC(\mathcal{A})$} &\textbf{$Dim$}&\textbf{$IC$}&\textbf{$QC(\mathcal{A})$}&\textbf{$Dim$}\\
			\hline
$\mathcal{A}s_4^1$&
$\left(\begin{array}{cccc}
a_{11}&0&0&0\\
0&a_{22}&0&0\\
a_{31}&a_{32}&a_{33}&a_{43}\\
a_{41}&a_{42}&a_{43}&a_{44}\\
\end{array}
\right)$
&
10
&
$\mathcal{A}s_4^{2}$&
$\left(\begin{array}{cccc}
a_{11}&0&0&0\\
0&a_{11}&0&0\\
a_{31}&a_{32}&a_{33}&a_{43}\\
a_{41}&a_{42}&a_{43}&a_{44}\\
\end{array}
\right)$
&
9
\\ \hline
$\mathcal{A}s_4^3$&
$\left(\begin{array}{cccc}
a_{11}&0&0&0\\
a_{21}&a_{11}&0&0\\
a_{31}&0&a_{11}&0\\
a_{41}&a_{42}&a_{43}&a_{44}\\
\end{array}
\right)$
&
7
&
$\mathcal{A}s_4^{4}$&
$\left(\begin{array}{cccc}
a_{11}&0&0&0\\
0&a_{22}&0&0\\
0&0&a_{22}&0\\
a_{41}&a_{42}&a_{43}&a_{44}\\
\end{array}
\right)$
&
6
\\ \hline
$\mathcal{A}s_4^5$&
$\left(\begin{array}{cccc}
a_{11}&0&0&0\\
0&a_{22}&0&0\\
0&0&a_{22}&0\\
a_{41}&a_{42}&a_{43}&a_{44}\\
\end{array}
\right)$
&
6
&
$\mathcal{A}s_4^{6}$&
$\left(\begin{array}{cccc}
a_{11}&0&0&0\\
0&a_{11}&0&0\\
a_{31}&a_{32}&a_{33}&a_{34}\\
a_{41}&a_{42}&a_{43}&a_{44}\\
\end{array}
\right)$
&
9
\\ \hline
$\mathcal{A}s_4^7$&
$\left(\begin{array}{cccc}
a_{11}&0&0&0\\
a_{21}&a_{11}&0&0\\
a_{31}&a_{32}&a_{33}&a_{34}\\
a_{41}&a_{42}&a_{43}&a_{44}\\
\end{array}
\right)$
&
9
&
$\mathcal{A}s_4^{8}$&
$\left(\begin{array}{cccc}
a_{11}&0&0&0\\
0&a_{11}&0&0\\
0&0&a_{33}&0\\
a_{41}&a_{42}&a_{43}&a_{44}\\
\end{array}
\right)$
&
6
\\ \hline
$\mathcal{A}s_4^9$&
$\left(\begin{array}{cccc}
a_{11}&0&0&0\\
0&2a_{11}&0&0\\
0&0&a_{33}&0\\
a_{41}&a_{42}&a_{43}&a_{44}\\
\end{array}
\right)$
&
6
&
$\mathcal{A}s_4^{10}$&
$\left(\begin{array}{cccc}
a_{11}&0&0&0\\
0&a_{11}&0&0\\
0&0&a_{11}&0\\
0&0&0&a_{11}\\
\end{array}
\right)$
&
1
\\ \hline
$\mathcal{A}s_4^{11}$&
$\left(\begin{array}{cccc}
a_{11}&0&0&0\\
0&a_{11}&0&0\\
0&0&a_{11}&0\\
0&0&0&a_{11}\\
\end{array}
\right)$
&
1
&
$\mathcal{A}s_4^{12}$&
$\left(\begin{array}{cccc}
a_{11}&0&0&0\\
0&a_{11}&0&0\\
0&0&a_{11}&0\\
0&0&0&a_{11}\\
\end{array}
\right)$
&
1
\\ \hline
$\mathcal{A}s_4^{13}$&
$\left(\begin{array}{cccc}
a_{11}&0&0&0\\
0&a_{11}&0&0\\
0&0&a_{33}&0\\
0&0&0&a_{33}\\
\end{array}
\right)$
&
2
&
$\mathcal{A}s_4^{14}$&
$\left(\begin{array}{cccc}
a_{11}&0&0&0\\
0&a_{11}&0&0\\
a_{31}&a_{32}&a_{33}&a_{34}\\
0&0&0&a_{11}\\
\end{array}
\right)$
&
5
\\ \hline
$\mathcal{A}s_4^{15}$&
$\left(\begin{array}{cccc}
a_{11}&0&0&0\\
0&a_{11}&0&0\\
0&0&a_{11}&0\\
0&0&0&a_{11}\\
\end{array}
\right)$
&
1
&
$\mathcal{A}s_4^{16}$&
$\left(\begin{array}{cccc}
a_{11}&0&0&0\\
a_{21}&a_{22}&0&0\\
-a_{21}&q_1&a_{11}&q_2\\
0&0&0&a_{11}\\
\end{array}
\right)$
&
4
\\ \hline
$\mathcal{A}s_4^{17}$&
$\left(\begin{array}{cccc}
a_{11}&0&0&0\\
0&a_{11}&0&0\\
0&0&a_{11}&0\\
0&0&0&a_{11}\\
\end{array}
\right)$
&
1
&
$\mathcal{A}s_4^{18}$&
$\left(\begin{array}{cccc}
a_{11}&0&0&0\\
0&a_{11}&0&0\\
0&0&a_{33}&0\\
0&0&0&a_{33}\\
\end{array}
\right)$
&
2
\\ \hline
$\mathcal{A}s_4^{19}$&
$\left(\begin{array}{cccc}
a_{11}&0&0&0\\
0&a_{11}&0&0\\
0&0&a_{11}&0\\
0&0&0&a_{11}\\
\end{array}
\right)$
&
1
&
$\mathcal{A}s_4^{20}$&
$\left(\begin{array}{cccc}
a_{11}&0&0&0\\
0&a_{22}&0&0\\
0&0&a_{33}&0\\
0&0&0&a_{44}\\
\end{array}
\right)$
&
4
\\ \hline
$\mathcal{A}s_4^{21}$&
$\left(\begin{array}{cccc}
a_{11}&0&0&0\\
a_{21}&a_{22}&0&0\\
q_3&0&a_{11}&0\\
a_{41}&a_{42}&a_{43}&a_{44}\\
\end{array}
\right)$
&
9
&
$\mathcal{A}s_4^{22}$&
$\left(\begin{array}{cccc}
a_{11}&0&0&0\\
0&a_{11}&0&0\\
a_{31}&a_{32}&a_{33}&a_{34}\\
a_{41}&a_{42}&a_{43}&a_{44}\\
\end{array}
\right)$
&
9
\\ \hline
$\mathcal{A}s_4^{23}$&
$\left(\begin{array}{cccc}
a_{11}&0&0&0\\
0&a_{11}&0&0\\
a_{31}&0&a_{33}&a_{34}\\
a_{41}&a_{42}&a_{43}&a_{44}\\
\end{array}
\right)$
&
8
&
$\mathcal{A}s_4^{24}$&
$\left(\begin{array}{cccc}
a_{11}&0&0&0\\
0&a_{22}&0&0\\
a_{31}&a_{32}&a_{33}&a_{34}\\
0&-q_1&0&a_{44}\\
\end{array}
\right)$
&
7
\\ \hline
\end{tabular}

\newpage
  \centering
\begin{tabular}{||c||c||c||c||c||c||c||c||c||c||c||c||}
\hline
\textbf{$IC$}&\textbf{$QC(\mathcal{A})$} &\textbf{$Dim$}&\textbf{$IC$}&\textbf{$QC(\mathcal{A})$}&\textbf{$Dim$}\\
			\hline
$\mathcal{A}s_4^{29}$&
$\left(\begin{array}{cccc}
a_{11}&0&0&0\\
0&a_{11}&0&0\\
0&0&a_{11}&0\\
0&0&0&a_{11}\\
\end{array}
\right)$
&
1
&
$\mathcal{A}s_4^{30}$&
$\left(\begin{array}{cccc}
a_{11}&0&0&0\\
0&a_{11}&0&0\\
0&0&a_{11}&0\\
0&0&0&a_{11}\\
\end{array}
\right)$
&
1
\\ \hline
$\mathcal{A}s_4^{31}$&
$\left(\begin{array}{cccc}
a_{11}&0&0&0\\
0&a_{11}&0&0\\
0&0&a_{11}&0\\
0&0&0&a_{11}\\
\end{array}
\right)$
&
1
&
$\mathcal{A}s_4^{32}$&
$\left(\begin{array}{cccc}
a_{11}&0&0&0\\
0&a_{22}&0&0\\
0&0&a_{22}&0\\
0&0&0&a_{22}\\
\end{array}
\right)$
&
2
\\ \hline
$\mathcal{A}s_4^{33}$&
$\left(\begin{array}{cccc}
a_{11}&0&0&0\\
0&a_{22}&0&0\\
0&0&a_{22}&0\\
0&0&0&a_{22}\\
\end{array}
\right)$
&
2
&
$\mathcal{A}s_4^{34}$&
$\left(\begin{array}{cccc}
a_{11}&0&0&0\\
0&a_{22}&0&0\\
0&0&a_{33}&0\\
0&0&a_{43}&a_{33}\\
\end{array}
\right)$
&
4
\\ \hline
$\mathcal{A}s_4^{35}$&
$\left(\begin{array}{cccc}
a_{11}&0&0&0\\
0&a_{11}&0&0\\
0&0&a_{33}&0\\
a_{41}&a_{42}&a_{43}&a_{44}\\
\end{array}
\right)$
&
6
&
$\mathcal{A}s_4^{36}$&
$\left(\begin{array}{cccc}
a_{11}&0&0&0\\
0&a_{11}&0&0\\
a_{31}&a_{32}&a_{33}&a_{34}\\
a_{41}&0&0&a_{11}\\
\end{array}
\right)$
&
6
\\ \hline
$\mathcal{A}s_4^{37}$&
$\left(\begin{array}{cccc}
a_{11}&0&0&0\\
0&2a_{11}&0&0\\
0&0&a_{11}&0\\
a_{41}&a_{42}&a_{43}&a_{44}\\
\end{array}
\right)$
&
5
&
$\mathcal{A}s_4^{38}$&
$\left(\begin{array}{cccc}
a_{11}&0&0&0\\
0&2a_{11}&0&0\\
0&0&a_{11}&0\\
a_{41}&a_{42}&a_{43}&a_{44}\\
\end{array}
\right)$
&
5
\\ \hline
$\mathcal{A}s_4^{39}$&
$\left(\begin{array}{cccc}
a_{11}&0&0&0\\
0&a_{11}&0&0\\
0&0&a_{11}&0\\
0&0&0&a_{11}\\
\end{array}
\right)$
&
1
&
$\mathcal{A}s_4^{40}$&
$\left(\begin{array}{cccc}
a_{11}&0&0&0\\
0&a_{11}&0&0\\
a_{31}&0&a_{11}&0\\
0&0&0&a_{11}\\
\end{array}
\right)$
&
2
\\ \hline
$\mathcal{A}s_4^{41}$&
$\left(\begin{array}{cccc}
a_{11}&0&0&0\\
a_{21}&a_{11}&0&0\\
0&0&a_{33}&0\\
0&0&a_{43}&a_{33}\\
\end{array}
\right)$
&
4
&
$\mathcal{A}s_4^{42}$&
$\left(\begin{array}{cccc}
a_{11}&0&0&0\\
0&a_{11}&0&0\\
0&0&a_{11}&0\\
0&0&0&a_{11}\\
\end{array}
\right)$
&
1
\\ \hline
$\mathcal{A}s_4^{43}$&
$\left(\begin{array}{cccc}
a_{11}&0&0&0\\
0&a_{11}&0&0\\
a_{31}&0&a_{11}&0\\
0&0&a_{42}&a_{11}\\
\end{array}
\right)$
&
3
&
$\mathcal{A}s_4^{44}$&
$\left(\begin{array}{cccc}
a_{11}&0&0&0\\
0&a_{11}&0&0\\
a_{31}&0&a_{11}&0\\
0&0&0&a_{11}\\
\end{array}
\right)$
&
2
\\ \hline
$\mathcal{A}_4^{45}$&
$\left(\begin{array}{cccc}
a_{11}&0&0&0\\
0&a_{11}&0&0\\
0&0&a_{11}&0\\
0&0&0&a_{11}\\
\end{array}
\right)$
&
1
&
$\mathcal{A}s_4^{46}$&
$\left(\begin{array}{cccc}
a_{11}&0&0&0\\
0&a_{11}&0&0\\
0&0&a_{11}&0\\
0&0&0&a_{11}\\
\end{array}
\right)$
&
1
\\ \hline
$\mathcal{A}s_4^{47}$&
$\left(\begin{array}{cccc}
a_{11}&0&0&0\\
0&a_{11}&0&0\\
0&0&a_{11}&0\\
0&0&0&a_{11}\\
\end{array}
\right)$
&
1
&
$\mathcal{A}s_4^{48}$&
$\left(\begin{array}{cccc}
a_{11}&0&0&0\\
0&a_{11}&0&0\\
0&0&a_{11}&0\\
0&0&0&a_{11}\\
\end{array}
\right)$
&
1
\\ \hline
$\mathcal{A}s_4^{49}$&
$\left(\begin{array}{cccc}
a_{11}&0&0&0\\
0&a_{11}&a_{23}&0\\
0&0&a_{11}&0\\
0&0&0&a_{11}\\
\end{array}
\right)$
&
2
&
$\mathcal{A}s_4^{50}$&
$\left(\begin{array}{cccc}
a_{11}&0&0&0\\
a_{21}&a_{11}&a_{23}&0\\
0&0&a_{11}&0\\
a_{41}&a_{42}&0&a_{11}\\
\end{array}
\right)$
&
5
\\ \hline
$\mathcal{A}s_4^{51}$&
$\left(\begin{array}{cccc}
a_{11}&0&0&0\\
a_{21}&a_{11}&0&0\\
a_{31}&0&a_{11}&0\\
a_{41}&0&0&a_{11}\\
\end{array}
\right)$
&
4
&
$\mathcal{A}s_4^{52}$&
$\left(\begin{array}{cccc}
a_{11}&0&0&0\\
a_{21}&a_{11}&0&0\\
0&0&a_{11}&0\\
a_{41}&a_{42}&0&a_{11}\\
\end{array}
\right)$
&
4
\\ \hline
\end{tabular}

 \centering
\begin{tabular}{||c||c||c||c||c||c||c||c||c||c||c||c||}
\hline
\textbf{$IC$}&\textbf{$QC(\mathcal{A})$} &\textbf{$Dim$}&\textbf{$IC$}&\textbf{$QC(\mathcal{A})$}&\textbf{$Dim$}\\
			\hline
$\mathcal{A}s_4^{53}$&
$\left(\begin{array}{cccc}
a_{11}&0&0&0\\
0&a_{22}&0&0\\
0&a_{32}&a_{22}&0\\
0&a_{42}&a_{32}&a_{22}\\
\end{array}
\right)$
&
4
&
$\mathcal{A}s_4^{54}$&
$\left(\begin{array}{cccc}
a_{11}&0&0&0\\
0&a_{11}&0&0\\
0&0&a_{11}&0\\
0&0&0&a_{11}\\
\end{array}
\right)$
&
1
\\ \hline
$\mathcal{A}s_4^{55}$&
$\left(\begin{array}{cccc}
a_{11}&0&0&0\\
0&a_{11}&0&0\\
a_{31}&0&a_{11}&0\\
0&0&0&a_{11}\\
\end{array}
\right)$
&
2
&
$\mathcal{A}s_4^{56}$&
$\left(\begin{array}{cccc}
a_{11}&0&0&0\\
a_{21}&a_{11}&0&0\\
0&0&a_{11}&0\\
0&0&a_{21}&a_{11}\\
\end{array}
\right)$
&
2
\\ \hline
$\mathcal{A}s_4^{57}$&
$\left(\begin{array}{cccc}
a_{11}&0&0&0\\
a_{21}&a_{11}&0&0\\
a_{31}&a_{21}&a_{11}&0\\
a_{41}&a_{31}&a_{21}&a_{11}\\
\end{array}
\right)$
&
4
&
$\mathcal{A}s_4^{58}$&
$\left(\begin{array}{cccc}
a_{11}&0&0&0\\
0&a_{11}&0&0\\
0&0&a_{11}&0\\
0&0&0&a_{11}\\
\end{array}
\right)$
&
1
\\ \hline
\end{tabular}

\begin{proof}	

Consider that $\mathcal{A}s_4^{1}$. Applying the systems of equations (\ref{eqQC}), we get
$a_{12}=a_{13}=a_{14}=a_{21}=a_{23}=a_{24}=0$. Hence, the quasi-centroids of $\mathcal{A}s_4^{1}$ are indicated as follows\\
$$QC(\mathcal{A})= \left\{\left(%
\begin{array}{cccc}
  a_{11}&0&0&0\\
0&a_{22}&0&0\\
a_{31}&a_{32}&a_{33}&a_{43}\\
a_{41}&a_{42}&a_{43}&a_{44}\\
\end{array}%
\right)| a_{11},  a_{22}, a_{31}, a_{31},a_{33},a_{41}, a_{43}, a_{44}\in\mathbb{C}\right\}.$$
The quasi-centroid of the remaining parts of dimension four associative algebras can be handled similarly, as illustrated above.
\end{proof}
\begin{cor}\,
\begin{itemize}
\item The dimensions of the quasi-centroid of $4$-dimensional associative algebras range between $1$ and $10$.
	\end{itemize}
\end{cor}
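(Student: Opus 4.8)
The plan is to read the statement off Theorem \ref{theoQC2}, together with one elementary remark that secures the lower end of the range. First I would note that for \emph{every} associative algebra $\mathcal{A}$ the scalar maps lie in the quasi-centroid: for $\lambda\in\mathbb{C}$ and all $x,y\in\mathcal{A}$,
\[
(\lambda\,\mathrm{id}_{\mathcal{A}})(x)\ast y=\lambda(x\ast y)=x\ast(\lambda\,\mathrm{id}_{\mathcal{A}})(y),
\]
so $\mathbb{C}\,\mathrm{id}_{\mathcal{A}}\subseteq QC(\mathcal{A})$ and hence $\dim QC(\mathcal{A})\ge 1$. This gives the lower bound, and it is sharp: in Theorem \ref{theoQC2} the quasi-centroid of $\mathcal{A}s_4^{10}$ (and of several further classes) is exactly $\mathbb{C}\,\mathrm{id}_{\mathcal{A}}$, of dimension $1$.

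For the upper bound I would invoke the classification of four-dimensional associative algebras from \cite{RRB} in tandem with Theorem \ref{theoQC2}. For each isomorphism class $\mathcal{A}s_4^{k}$ the quasi-centroid is obtained by feeding its structure constants into the linear system (\ref{eqQC}) and solving for the matrix $(a_{ij})_{1\le i,j\le 4}$ -- this is exactly what the tables of Theorem \ref{theoQC2} record. The crude \emph{a priori} bound is $\dim QC(\mathcal{A})\le\dim \mathrm{End}(\mathcal{A})=16$, and the improvement to $10$ becomes visible only after examining these solution spaces class by class. Scanning the dimension columns one checks that every entry lies in $\{1,2,\dots,10\}$, that $10$ is attained (by $\mathcal{A}s_4^{1}$), and in fact that each intermediate value also occurs (for instance $2$ for $\mathcal{A}s_4^{13}$, $3$ for $\mathcal{A}s_4^{43}$, $5$ for $\mathcal{A}s_4^{14}$, $7$ for $\mathcal{A}s_4^{3}$, $9$ for $\mathcal{A}s_4^{6}$), so the quoted range is exact.

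Thus the only substantive ingredient is the case-by-case solution of (\ref{eqQC}) that underlies Theorem \ref{theoQC2}; granting that, the corollary follows immediately. The step I expect to be the real obstacle is ensuring the bookkeeping behind those tables is both complete and correct -- that no isomorphism class from \cite{RRB} has been left out and that each solved linear system has been reduced to a genuinely minimal set of free parameters -- since a slip there could change the maximal value. As an independent cross-check I would verify, for each class possessing a unit element, that the conditions defining $QC(\mathcal{A})$ force every $\phi\in QC(\mathcal{A})$ to be a right multiplication $R_z=L_z$ by a central element $z$, so that $QC(\mathcal{A})$ is then isomorphic to the centre of $\mathcal{A}$; this both explains the numerous dimension-$1$ entries and gives a second, structural handle on the larger ones.
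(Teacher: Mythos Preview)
Your proposal is correct and is essentially the paper's own (implicit) argument: the corollary is stated immediately after the tables of Theorem~\ref{theoQC2} with no separate proof, so ``read the dimension column'' is all there is. Your additions --- the observation that $\mathbb{C}\,\mathrm{id}_{\mathcal{A}}\subseteq QC(\mathcal{A})$ for any algebra, the explicit witnesses for the extreme and intermediate values, and the unital-case sanity check via $QC(\mathcal{A})\cong Z(\mathcal{A})$ --- go a little beyond what the paper records but do not change the approach.
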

\begin{cor}
\begin{itemize}
\item[\emph{i)}]  Besides the types $\mathcal{A}s^{10}_{4}$, $\mathcal{A}s^{11}_{4}$, $\mathcal{A}s^{12}_{4}$, $\mathcal{A}s^{15}_{4}$, $\mathcal{A}s^{17}_{4}$, $\mathcal{A}s^{19}_{4}$, $\mathcal{A}s^{29}_{4}$, $\mathcal{A}s^{30}_{4}$, $\mathcal{A}s^{31}_{4}$, $\mathcal{A}s^{39}_{4}$,$\mathcal{A}s^{40}_{4}$,$\mathcal{A}s^{42}_{4}$, $\mathcal{A}s^{43}_{4}$, $\mathcal{A}s^{44}_{4}$, $\mathcal{A}s^{45}_{4}$, $\mathcal{A}s^{46}_{4}$, $\mathcal{A}s^{47}_{4}$, $\mathcal{A}s^{48}_{4}$, $\mathcal{A}s^{49}_{4}$, $\mathcal{A}s^{54}_{4}$, $\mathcal{A}s^{55}_{4}$, $\mathcal{A}s^{56}_{4}$, $\mathcal{A}s^{57}_{4}$ and $\mathcal{A}s^{58}_{4}$  any four-dimensional complex  isomorphism classes of associative  algebra  have a small quasi-centroid.
%\item[\emph{ii)}] The dimensions of the centroids of two-dimensional associative  algebras vary between one and two.
\item[\emph{ii)}]  The quasi-centroid of a four-dimensional complex non isomorphic  of associative algebra are not small.
\end{itemize}
\end{cor}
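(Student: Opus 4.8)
Recall that $Z(\mathcal{A})=Z_{\mathcal{A}}(\mathcal{A})$ is the two-sided annihilator of $\mathcal{A}$. By the definitions recalled above, $QC(\mathcal{A})$ is \emph{small} exactly when it is spanned, as a vector space, by the scalar operators together with the central derivations, i.e.\ when $QC(\mathcal{A})=\mathbb{C}\,id_{\mathcal{A}}+ZDer(\mathcal{A})$ (for a decomposable algebra this is to be verified on each indecomposable summand). I would first record two elementary facts. First, $ZDer(\mathcal{A})=\{\phi\in End(\mathcal{A}):\mathrm{Im}\,\phi\subseteq Z(\mathcal{A})\}$, since the condition $\phi(x)\cdot y=x\cdot\phi(y)=0$ for all $x,y$ says precisely that $\mathrm{Im}\,\phi$ lies in both the left and the right annihilator of $\mathcal{A}$. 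Second, $\mathbb{C}\,id_{\mathcal{A}}+ZDer(\mathcal{A})\subseteq QC(\mathcal{A})$ always holds, since $id_{\mathcal{A}}(x)\cdot y=x\cdot id_{\mathcal{A}}(y)$ and since both sides of the quasi-centroid identity vanish once $\mathrm{Im}\,\phi\subseteq Z(\mathcal{A})$. Hence the corollary amounts to deciding, for each $4$-dimensional complex associative algebra of \cite{RRB}, whether the reverse inclusion $QC(\mathcal{A})\subseteq\mathbb{C}\,id_{\mathcal{A}}+ZDer(\mathcal{A})$ holds; the explicit matrix forms obtained in Theorem~\ref{theoQC2} are exactly the input needed.

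For part~(i) I would go through the classes in the list. Those for which Theorem~\ref{theoQC2} gives $QC(\mathcal{A})=\{a_{11}\,id_{\mathcal{A}}\}$ need no argument: $QC(\mathcal{A})=\mathbb{C}\,id_{\mathcal{A}}$ is already spanned by scalars. For the remaining members of the list the matrix of $QC(\mathcal{A})$ has constant diagonal $a_{11}$, so each $\phi\in QC(\mathcal{A})$ decomposes uniquely as $\phi=a_{11}\,id_{\mathcal{A}}+\psi$ with $\psi$ the off-diagonal part of zero diagonal; it then suffices to read $Z(\mathcal{A})$ off the structure constants of the algebra in \cite{RRB} and to check that every basic off-diagonal generator $\psi$ satisfies $\mathrm{Im}\,\psi\subseteq Z(\mathcal{A})$, i.e.\ $\psi\in ZDer(\mathcal{A})$. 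This yields $QC(\mathcal{A})=\mathbb{C}\,id_{\mathcal{A}}+ZDer(\mathcal{A})$, so the quasi-centroid is small; for the decomposable members one runs the same verification on each indecomposable factor.

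For part~(ii) I would split the classes not appearing in~(i) into two families according to the shape of $QC(\mathcal{A})$ in Theorem~\ref{theoQC2}. If $QC(\mathcal{A})$ contains an operator with two distinct diagonal entries $a_{ii}\neq a_{jj}$ where $e_j\notin Z(\mathcal{A})$, that operator cannot be written as $\lambda\,id_{\mathcal{A}}+\psi$ with $\psi\in ZDer(\mathcal{A})$, because every element of $\mathbb{C}\,id_{\mathcal{A}}+ZDer(\mathcal{A})$ induces a scalar operator on $\mathcal{A}/Z(\mathcal{A})$ while our operator does not. If instead $QC(\mathcal{A})$ has constant diagonal but is still strictly larger than $\mathbb{C}\,id_{\mathcal{A}}+ZDer(\mathcal{A})$, I would exhibit a basic off-diagonal generator $\psi\in QC(\mathcal{A})$ with $\mathrm{Im}\,\psi\nsubseteq Z(\mathcal{A})$; such a $\psi$ has zero diagonal yet is not a central derivation, so $\psi\notin\mathbb{C}\,id_{\mathcal{A}}+ZDer(\mathcal{A})$. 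Either way $QC(\mathcal{A})$ fails to be small, and for a decomposable such algebra it is enough to locate one indecomposable factor that already fails one of the two tests.

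The real content, and the main obstacle, lies in the bookkeeping common to both steps: for every algebra of \cite{RRB} one must pin down $Z(\mathcal{A})$ from its multiplication table and compare it with the off-diagonal freedom displayed by $QC(\mathcal{A})$ in Theorem~\ref{theoQC2}. The delicate classes are precisely those whose quasi-centroid carries genuine off-diagonal parameters: for these one must decide whether each parameter comes from a map with image inside $Z(\mathcal{A})$ --- making it a central derivation, and the algebra small --- or from a map satisfying the quasi-centroid identity whose image is not in the center. I expect this center computation, carried out uniformly over the classification of \cite{RRB} and combined with careful tracking of the indecomposable/decomposable distinction, to be where essentially all the work is.
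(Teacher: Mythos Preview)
The paper gives no proof of this corollary at all; it is simply stated after the tables of Theorem~\ref{theoQC2} as something to be read off from them. Your proposal therefore goes well beyond what the paper does: your characterization $ZDer(\mathcal{A})=\{\phi:\operatorname{Im}\phi\subseteq Z(\mathcal{A})\}$, the reduction of ``small'' to the inclusion $QC(\mathcal{A})\subseteq\mathbb{C}\,id_{\mathcal{A}}+ZDer(\mathcal{A})$, and the plan of comparing the off-diagonal parameters in each $QC(\mathcal{A})$ against the centre computed from the multiplication tables of \cite{RRB} is exactly the case-by-case verification such a classification result requires, and it is the only reasonable route.

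One warning about the statement itself. You have read part~(i) as asserting that the \emph{listed} algebras are the small ones and part~(ii) as asserting that the remaining algebras are not small; your proof is organised accordingly. The literal English (``Besides the types \dots\ any four-dimensional \dots\ have a small quasi-centroid'') says the opposite: the listed algebras are the exceptions, i.e.\ the \emph{non}-small ones. Your reading is the mathematically coherent one, since many algebras on the list (e.g.\ $\mathcal{A}s_4^{10}$, $\mathcal{A}s_4^{11}$, $\mathcal{A}s_4^{54}$) have $QC(\mathcal{A})=\mathbb{C}\,id_{\mathcal{A}}$ and are therefore trivially small; the paper's wording appears to be garbled. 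Just be aware that the argument you outline establishes the converse of what the corollary literally says.
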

\begin{itemize}
	\item $q_1=a_{11}-a_{22},\quad q_2=a_{11}-a_{23},\quad q_3=a_{23}-a_{24}.$
\end{itemize}
%\newpage
\section{Procedure to find Quasi-Derivation of low dimensional associative algebras}
This section is devoted to the description of quasi-derivation of two and three-dimensional complex associative algebras.
Let $\{e_1, e_2, e_3, \cdots , e_n \}$ be  a basis of an $n$-dimensional associative algebras,  an element $\phi$ of the quasi-derivation $QDer(\mathcal{A})$ being a linear transformation of the vector space $\mathcal{A}$ is represented in a matrix form $(a_{ij})_{i,j=1,2,\cdots,n},$ i.e. $d(e_i)=\sum\limits_{j=1}^{n}a_{ji}e_{j},$ $i=1,2,\cdots,n.$ According to the definition of the quasi-derivation the entries $a_{ij}$ $i,j=1,2,\cdots,n,$ of the matrix $(a_{ij})_{i,j=1,2,\cdots,n}$

must satisfy the following systems of equations:
\begin{equation}\label{qaaq}
  \sum\limits_{k=1}^{n}\Bigg(d_{ki}C_{kj}^{p}+d_{kj}C_{ik}^{p}-C_{ij}^{k}d'_{pk}\Bigg)=0,\quad i=j=p=1,2,\cdots,n.
\end{equation}
\begin{thm}
The  \textbf{Quasi-Derivation} of two dimensional complex  associative algebras are given as follows:
\end{thm}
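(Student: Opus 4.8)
The plan is to proceed exactly as in the three preceding classification theorems: treat the two-dimensional complex associative algebras one at a time, using the list of isomorphism classes $\mathcal{A}s_2^1,\dots,\mathcal{A}s_2^5$ from \cite{RRB} together with their structure constants $\{C_{ij}^k\}$, and for each class solve the linear system \eqref{qaaq} for the unknown matrices $(a_{ij})_{i,j=1,2}$ and $(a'_{ij})_{i,j=1,2}$. Concretely, I would fix a generic pair $\phi=(a_{ij})$, $\phi'=(a'_{ij})$ and impose, for every triple $(i,j,p)\in\{1,2\}^3$, the equation
\[
\sum_{k=1}^{2}\Big(a_{ki}C_{kj}^{p}+a_{kj}C_{ik}^{p}-C_{ij}^{k}a'_{pk}\Big)=0.
\]
Since each $\mathcal{A}s_2^m$ has only one or two nonzero structure constants, most of these eight equations are vacuous and the rest reduce to a handful of linear relations among the $a_{ij}$ (the auxiliary entries $a'_{ij}$ being eliminated along the way); reading off the solution space gives the matrix form of $QDer(\mathcal{A})$ and its dimension.

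The key steps, in order, are: (1) recall from \cite{RRB} the multiplication table of each $\mathcal{A}s_2^m$, $m=1,\dots,5$ — e.g. $\mathcal{A}s_2^1$ has $C_{11}^2=1$ and all other constants zero, and similarly for the remaining four classes; (2) substitute these constants into \eqref{qaaq} and collect the resulting relations; (3) solve, record the surviving free parameters, and present the answer as a parametrized matrix exactly as in the $QC$ tables; (4) assemble the five cases into a single table with columns $IC$, $QDer(\mathcal{A})$, $Dim$, and write out one representative case (most naturally $\mathcal{A}s_2^1$) in full in the proof, remarking that the others are handled identically. I would also double-check consistency against the earlier $QC$ computation, since $QC(\mathcal{A})\subset QDer(\mathcal{A})$ is forced (taking $\phi'=0$ recovers a quasi-centroid-type condition only partially, so the containment should be verified directly rather than assumed), and against Lemma stating $[QC,QC]\subset QDer$.

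The main obstacle is not conceptual but bookkeeping: the system \eqref{qaaq} couples the primed and unprimed matrices, so one must be careful that the eliminated $a'_{pk}$ do not secretly impose extra constraints on the $a_{ij}$, and one must make sure the $a'$ always exists once the stated conditions on $a$ hold — i.e. that the claimed matrix form is exactly the image of $QDer$ under $\phi\mapsto(a_{ij})$ and not a proper sub- or super-space. For the commutative or low-rank classes among $\mathcal{A}s_2^1,\dots,\mathcal{A}s_2^5$ this is routine; the only case needing a little care is the one with two nonzero structure constants, where two of the eight equations interact. Once that verification is in place, the theorem follows by direct inspection of the five solution spaces.
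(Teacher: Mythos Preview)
Your proposal is correct and follows essentially the same approach as the paper: the paper's ``proof'' consists precisely of substituting the structure constants of each $\mathcal{A}s_2^m$ from \cite{RRB} into the system \eqref{qaaq} and tabulating the resulting solution spaces, with no representative case worked out in the text. The only minor discrepancy is presentational: the paper records both the matrix $D$ of $\phi$ and the companion matrix $D'$ of $\phi'$ in its table (rather than a single $QDer(\mathcal{A})$ column with dimension), so you should retain the $a'_{ij}$ data rather than eliminate it.
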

%\begin{table}[ht!]
\centering
\begin{tabular}{||c||c||c||c||c||c||c||c||c||c||c||c||}
\hline
&$D$&$D{'}$
\\ \hline
$\mathcal{A}s_2^1$
&
$\begin{array}{ll}
\left(\begin{array}{cccc}
d_{11}&0\\
d_{21}&d_{22}
\end{array}
\right)
\end{array}$
&
$\begin{array}{ll}
\left(\begin{array}{cccc}
a_{11}&0\\
a_{21}&2d_{11}
\end{array}
\right)
\end{array}$
\\ \hline
$\mathcal{A}s_2^{2}$
&
$\begin{array}{ll}
\left(\begin{array}{cccc}
\frac{1}{2} a_{11}&0\\
a_{21}&-\frac{1}{2} a_{11}+a_{22}
\end{array}
\right)
\end{array}$
&
$\begin{array}{ll}
\left(\begin{array}{cccc}
a_{11}&0\\
a_{21}&a_{22}
\end{array}
\right)
\end{array}$
\\ \hline
$\mathcal{A}s_2^{3}$
&
$\begin{array}{ll}
\left(\begin{array}{cccc}
\frac{1}{2} a_{11}&0\\
a_{21}&-\frac{1}{2} a_{11}+a_{22}
\end{array}
\right)
\end{array}$
&
$\begin{array}{ll}
\left(\begin{array}{cccc}
a_{11}&0\\
0&a_{22}
\end{array}
\right)
\end{array}$
\\ \hline
$\mathcal{A}s_2^{4}$
&
$\begin{array}{ll}
\left(\begin{array}{cccc}
\frac{1}{2}a_{11}&0\\
0& \frac{1}{2} a_{22}
\end{array}
\right)
\end{array}$
&
$\begin{array}{ll}
\left(\begin{array}{cccc}
a_{11}&0\\
0&a_{22}
\end{array}
\right)
\end{array}$
\\ \hline
$\mathcal{A}s_2^{5}$
&
$\begin{array}{ll}
\left(\begin{array}{cccc}
d_{11}&0\\
0&d_{11}
\end{array}
\right)
\end{array}$
&
$\begin{array}{ll}
\left(\begin{array}{cccc}
2d_{11}&0\\
0&2d_{11}
\end{array}
\right)
\end{array}$
\\ \hline
\end{tabular}
%\end{table}
%\newpage
\begin{thm}
The Quasi-derivation of three dimensional complex  associative algebras are given as follows:
\end{thm}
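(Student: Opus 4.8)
The plan is to proceed exactly as in the two-dimensional case: for each of the nineteen isomorphism classes $\mathcal{A}s_3^i$ of three-dimensional complex associative algebras listed in \cite{RRB}, read off the structure constants $C_{ij}^k$, substitute them into the defining system \eqref{qaaq} for a pair $(d,d')$ with $d=(d_{ij})$ and $d'=(d'_{ij})$, and solve the resulting linear system for the entries of both matrices. Concretely, first I would record, for each class, the nonzero $C_{ij}^k$; then for every triple $(i,j,p)$ the equation $\sum_{k}\bigl(d_{ki}C_{kj}^{p}+d_{kj}C_{ik}^{p}-C_{ij}^{k}d'_{pk}\bigr)=0$ becomes a scalar linear relation among the $a_{ij}$'s and the $d'_{ij}$'s, and collecting all $n^3$ of these (many of which are trivial because most $C_{ij}^k$ vanish) yields the parametrized families displayed in the table. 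Finally I would present the answer for one representative class in full, e.g. $\mathcal{A}s_3^1$ with $C_{13}^2=C_{31}^2=1$, working through the nontrivial equations to pin down which off-diagonal entries vanish and which diagonal entries coincide, and then state that the remaining classes are handled identically.

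The key steps, in order, are: (i) fix notation and note that \eqref{qaaq} is linear and decouples across the triples $(i,j,p)$, so the computation is a finite sequence of elementary Gaussian eliminations; (ii) for each of the nineteen algebras, extract the structure constants from the classification in \cite{RRB} and substitute; (iii) solve, identifying the free parameters of $D$ and the induced form of $D'$; (iv) assemble the table and, as a sanity check, verify $\dim QDer(\mathcal{A}) \ge \dim QC(\mathcal{A})$ and $\dim QDer(\mathcal{A})\ge \dim Der(\mathcal{A})$ for each class, using the inclusions from the Lemma in Section~3. I would carry out (iii) explicitly only for the one showcased class and assert that the rest follow \emph{mutatis mutandis}, exactly as the two-dimensional theorem is proved.

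The main obstacle is purely bookkeeping rather than conceptual: the system \eqref{qaaq} now involves $2n^2 = 18$ unknowns instead of $8$, and for the classes with several nonzero structure constants (for instance the ones whose centroid already has a free parameter $g=a_{11}+a_{21}$, such as $\mathcal{A}s_3^4$, $\mathcal{A}s_3^6$, $\mathcal{A}s_3^{11}$) the equations couple the entries of $D$ and $D'$ in a way that requires careful elimination to avoid spurious or missed relations. A second subtlety is consistency of conventions: one must be sure the indexing of $C_{ij}^k$ (which slot is the left factor) matches the one used in \cite{RRB}, since a transpose error would propagate through every entry of the table; I would fix this once, against the already-verified $QC$ computation, and then reuse it. No new machinery is needed beyond the algorithm of the preceding section and linear algebra over $\mathbb{C}$.
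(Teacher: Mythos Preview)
Your proposal is correct and follows essentially the same route as the paper: the paper does not write out a formal proof for this theorem but simply relies on the procedure announced at the start of Section~7, namely solving the linear system \eqref{qaaq} in the unknowns $(d_{ij},d'_{ij})$ for each algebra in the classification of \cite{RRB}, which is exactly what you describe (and your added sanity checks via the inclusions of Section~3 go slightly beyond what the paper does). One small correction: the classification in \cite{RRB} used here has seventeen three-dimensional classes $\mathcal{A}s_3^1,\dots,\mathcal{A}s_3^{17}$, not nineteen.
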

%\begin{table}[ht!]
  \centering
  \begin{tabular}{||c||c||c||c||c||c||c||c||c||c||c||c||}
\hline
&$D$&$D^{'}$\\
			\hline
$\mathcal{A}s_3^{1}$
&
$\begin{array}{ll}
\left(\begin{array}{cccc}
d_{11}&0&d_{23}\\
d_{21}&d_{22}&0\\
0&0&a_{22}-d_{11}
\end{array}
\right)
\end{array}$
&
$\begin{array}{ll}
\left(\begin{array}{cccc}
a_{11}&0&a_{13}\\
a_{21}&a_{22}&a_{23}\\
a_{31}&0&a_{33}
\end{array}
\right)
\end{array}$
\\ \hline
$\mathcal{A}s_3^{2}$
&
$\begin{array}{ll}
\left(\begin{array}{cccc}
d_{11}&0&0\\
d_{21}&d_{22}&d_{23}\\
0&0&a_{22}-d_{11}
\end{array}
\right)
\end{array}$
&
$\begin{array}{ll}
\left(\begin{array}{cccc}
a_{11}&0&a_{13}\\
a_{21}&a_{22}&a_{23}\\
a_{31}&0&a_{33}
\end{array}
\right)
\end{array}$
\\ \hline
$\mathcal{A}s_3^{3}$
&
$\begin{array}{ll}
\left(\begin{array}{cccc}
d_{11}&0&0\\
d_{21}&d_{22}&0\\
d_{31}&d_{32}&d_{33}
\end{array}
\right)
\end{array}$
&
$\begin{array}{ll}
\left(\begin{array}{cccc}
a_{11}&0&0\\
a_{21}&2d_{11}&0\\
a_{31}&2d_{21}&d_{11}+d_{22}
\end{array}
\right)
\end{array}$
\\ \hline
$\mathcal{A}s_3^{4}$
&
$\begin{array}{ll}
\left(\begin{array}{cccc}
d_{11}&d_{12}&d_{13}\\
d_{21}&d_{11}+d_{21}&a_{23}-d_{13}\\
0&0&a_{22}+d_{11}
\end{array}
\right)
\end{array}$
&
$\begin{array}{ll}
\left(\begin{array}{cccc}
a_{11}&0&0\\
a_{21}&a_{22}&a_{23}\\
a_{31}&0&2a_{22}-2d_{11}
\end{array}
\right)
\end{array}$
\\ \hline
$\mathcal{A}s_3^{5}$
&
$\begin{array}{ll}
\left(\begin{array}{cccc}
d_{11}&0&a_{13}\\
0&a_{22}+d_{11}&a_{23}\\
0&0&a_{11}-d_{11}
\end{array}
\right)
\end{array}$
&
$\begin{array}{ll}
\left(\begin{array}{cccc}
a_{11}&0&a_{13}\\
0&a_{22}&a_{23}\\
0&0&2a_{11}-2d_{11}
\end{array}
\right)
\end{array}$
\\ \hline
$\mathcal{A}s_3^{6}$
&
$\begin{array}{ll}
\left(\begin{array}{cccc}
d_{11}&d_{12}&d_{13}\\
d_{21}&d_{11}-d_{12}+d_{21}&a_{23}-d_{13}\\
0&0&a_{22}-d_{11}-d_{21}
\end{array}
\right)
\end{array}$
&
$\begin{array}{ll}
\left(\begin{array}{cccc}
a_{11}&0&0\\
a_{21}&a_{22}&a_{23}\\
a_{31}&0&2a_{22}-2d_{11}-2d_{21}
\end{array}
\right)
\end{array}$
\\ \hline
$\mathcal{A}s_3^{7}$
&
$\begin{array}{ll}
\left(\begin{array}{cccc}
d_{11}&d_{12}&d_{13}\\
0&a_{11}-d_{11}&0\\
0&0&a_{11}-d_{11}
\end{array}
\right)
\end{array}$
&
$\begin{array}{ll}
\left(\begin{array}{cccc}
a_{11}&a_{12}&-a_{12}\\
0&2a_{22}-2d_{11}&0\\
0&0&a_{11}-d_{11}
\end{array}
\right)
\end{array}$
\\ \hline
$\mathcal{A}s_3^{8}$
&
$\begin{array}{ll}
\left(\begin{array}{cccc}
d_{11}&0&\frac{1}{2} a_{13}\\
0&-a_{11}+a_{22}+d_{11}&a_{23}\\
0&0&a_{11}-d_{11}
\end{array}
\right)
\end{array}$
&
$\begin{array}{ll}
\left(\begin{array}{cccc}
a_{11}&0&a_{13}\\
0&a_{22}&a_{23}\\
0&0&2a_{11}-2d_{11}
\end{array}
\right)
\end{array}$
\\ \hline
$\mathcal{A}s_3^{9}$
&
$\begin{array}{ll}
\left(\begin{array}{cccc}
d_{11}&0&a_{13}\\
0&a_{22}-a_{11}+d_{11}&\frac{1}{2} a_{23}\\
0&0&a_{11}-d_{11}
\end{array}
\right)
\end{array}$
&
$\begin{array}{ll}
\left(\begin{array}{cccc}
a_{11}&0&a_{13}\\
0&a_{22}&a_{23}\\
0&0&2a_{11}-2d_{11}
\end{array}
\right)
\end{array}$
\\ \hline
$\mathcal{A}s_3^{10}$
&
$\begin{array}{ll}
\left(\begin{array}{cccc}
d_{11}&a_{12}&\frac{1}{2}a_{13}\\
a_{21}&a_{22}-a_{11}+d_{11}&\frac{1}{2} a_{23}\\
0&0&a_{11}-d_{11}
\end{array}
\right)
\end{array}$
&
$\begin{array}{ll}
\left(\begin{array}{cccc}
a_{11}&a_{12}&a_{13}\\
a_{21}&a_{22}&a_{23}\\
0&0&2a_{11}-2d_{11}
\end{array}
\right)
\end{array}$
\\ \hline
\end{tabular}
%\end{table}

%\begin{table}[ht!]
  \centering
  \begin{tabular}{||c||c||c||c||c||c||c||c||c||c||c||c||}
\hline
&$D$&$D{'}$\\
			\hline	
$\mathcal{A}s_3^{11}$
&
$\begin{array}{ll}
\left(\begin{array}{cccc}
d_{11}&d_{12}&d_{13}\\
d_{21}&d_{11}-d_{12}+d_{21}&\frac{1}{2} a_{23}-d_{13}\\
0&0&a_{22}-d_{11}-d_{21}
\end{array}
\right)
\end{array}$
&
$\begin{array}{ll}
\left(\begin{array}{cccc}
a_{11}&0&0\\
a_{21}&a_{22}&a_{23}\\
a_{31}&0&2a_{22}-2d_{11}-2d_{21}
\end{array}
\right)
\end{array}$
\\ \hline			
$\mathcal{A}s_3^{15}$
&
$\begin{array}{ll}
\left(\begin{array}{cccc}
d_{11}&a_{12}&0\\
0&a_{11}-d_{11}&0\\
0&0&\frac{1}{2}a_{33}
\end{array}
\right)
\end{array}$
&
$\begin{array}{ll}
\left(\begin{array}{cccc}
a_{11}&a_{12}&0\\
0&2a_{11}-2d_{11}&0\\
0&0&a_{33}
\end{array}
\right)
\end{array}$
\\ \hline
$\mathcal{A}s_3^{16}$
&
$\begin{array}{ll}
\left(\begin{array}{cccc}
d_{11}&a_{12}&0\\
0&a_{11}-d_{11}&0\\
0&0&\frac{1}{2}a_{33}
\end{array}
\right)
\end{array}$
&
$\begin{array}{ll}
\left(\begin{array}{cccc}
a_{11}&a_{12}&0\\
0&2a_{11}-2d_{11}&0\\
0&0&a_{33}
\end{array}
\right)
\end{array}$
\\ \hline
$\mathcal{A}s_3^{17}$
&
$\begin{array}{ll}
\left(\begin{array}{cccc}
d_{11}&0&0\\
d_{21}&d_{22}&d_{23}\\
0&0&\frac{1}{2}a_{33}
\end{array}
\right)
\end{array}$
&
$\begin{array}{ll}
\left(\begin{array}{cccc}
a_{11}&0&0\\
a_{21}&2d_{11}&0\\
a_{31}&0&a_{33}
\end{array}
\right)
\end{array}$
\\ \hline
\end{tabular}
%\end{table}

\section{Description of Quasi-Derivation  of four dimensional associative algebras}
In this section we make use the result\cite{RRB} on classification of four-dimensional associative algebras and present quasi-derivation of algebras
\begin{thm}\label{Aut3}
The  \textbf{Quasi-Derivation}  of $4$-dimensional associative algebras have the following form:
\end{thm}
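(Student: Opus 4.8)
The plan is to mimic the method already used for the two- and three-dimensional cases: exhaust the finite classification of four-dimensional associative algebras given in \cite{RRB}, and for each isomorphism class solve the linear system (\ref{qaaq}) for the matrices $D=(d_{ij})$ and $D'=(d'_{ij})$ of a generic quasi-derivation $\phi$ and its companion $\phi'$.

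First I would fix a class $\mathcal{A}$ with its structure constants $\{C_{ij}^{k}\}$ from \cite{RRB}, write $\phi(e_i)=\sum_{j}d_{ji}e_j$ and $\phi'(e_i)=\sum_{j}d'_{ji}e_j$, and substitute into $\phi(e_i)e_j+e_i\phi(e_j)=\phi'(e_ie_j)$; comparing the coefficient of $e_p$ on the two sides yields precisely the equations (\ref{qaaq}). Since the $C_{ij}^{k}$ are explicit scalars (almost all equal to $0$ or $1$), for each class this becomes a homogeneous linear system in the $32$ scalar unknowns, which I would reduce by Gaussian elimination. The resulting parametrized family of pairs $(D,D')$ gives the matrix shapes displayed in the table, and the number of free parameters is $\dim QDer(\mathcal{A})$.

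Next I would assemble these outputs into the table and run two consistency checks. Every pair of the form $(\phi,\phi)$ with $\phi\in Der(\mathcal{A})$ solves (\ref{qaaq}), so each computed solution space must contain the derivation algebra of $\mathcal{A}$; and the two extreme cases behave as expected, namely every linear map is a quasi-derivation when the multiplication is trivial, while algebras near having an identity force the tightest relations of the type $d'_{pp}=2d_{11}$ that already appeared for $\mathcal{A}s_2^1$ and $\mathcal{A}s_3^3$. As in the lower-dimensional theorems, only one representative class need be written out explicitly, the rest being entirely analogous.

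The main obstacle is bookkeeping rather than mathematics. There are more than fifty classes, several of which occur in one- or two-parameter families, so their structure constants, and hence the associated linear systems, depend on those parameters; one must check that the shape of the solution --- which entries vanish, which coincide, which stay free --- does not jump at special parameter values, and handle such degenerations uniformly. Keeping the indexing of the $C_{ij}^{k}$ consistent with \cite{RRB} throughout all the cases is the only real source of error; each individual linear-algebra computation is routine.
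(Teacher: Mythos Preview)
Your proposal is correct and follows essentially the same approach as the paper: the paper provides no explicit proof for this theorem beyond the tables themselves, relying on the same case-by-case procedure of substituting the structure constants of each class from \cite{RRB} into the linear system (\ref{qaaq}) and solving for the entries of $D$ and $D'$. Your description is in fact more detailed than anything the paper writes out, including the consistency checks and the caution about parameter degenerations.
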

\begin{table}[ht!]
\centering
\begin{tabular}{||c||c||c||c||c||c||c||c||c||c||c||c||}
\hline
&$D$&$D^{'}$\\
\hline
$\mathcal{A}s_{4}^{1}$

&
$\begin{array}{ll}
\left(\begin{array}{cccc}
d_{11}&0&0&0\\
0&d_{22}&0&0\\
d_{31}&d_{32}&d_{33}&d_{34}\\
d_{41}&d_{42}&d_{43}&d_{44}
\end{array}
\right)
\end{array}$
&
$\begin{array}{ll}
\left(\begin{array}{cccc}
a_{11}&a_{12}&0&0\\
a_{21}&a_{22}&0&0\\
a_{31}&a_{32}&2d_{11}&0\\
a_{41}&a_{42}&0&2d_{22}
\end{array}
\right)
\end{array}$
\\ \hline
$\mathcal{A}s_4^{2}$
&
$\begin{array}{ll}
\left(\begin{array}{cccc}
d_{11}&0&0&0\\
0&d_{22}&0&0\\
d_{31}&d_{32}&d_{33}&d_{34}\\
d_{41}&d_{42}&d_{43}&d_{44}
\end{array}
\right)
\end{array}$
&
$\begin{array}{ll}
\left(\begin{array}{cccc}
a_{11}&a_{12}&0&0\\
a_{21}&a_{22}&0&0\\
a_{31}&a_{32}&d_{11}+d_{22}&0\\
a_{41}&a_{42}&0&d_{11}+d_{22}
\end{array}
\right)
\end{array}$
\\ \hline
$\mathcal{A}s_4^{3}$
&
$\begin{array}{ll}
\left(\begin{array}{cccc}
d_{11}&0&0&0\\
0&d_{22}&0&0\\
0&0&d_{33}&0\\
d_{41}&d_{42}&d_{43}&d_{44}
\end{array}
\right)
\end{array}$
&
$\begin{array}{ll}
\left(\begin{array}{cccc}
a_{11}&a_{12}&0&0\\
a_{21}&a_{22}&0&0\\
a_{31}&a_{32}&d_{11}+d_{22}&0\\
a_{41}&a_{42}&0&d_{11}+d_{33}
\end{array}
\right)
\end{array}$
\\ \hline
$\mathcal{A}s_4^{4}$
&
$\begin{array}{ll}
\left(\begin{array}{cccc}
d_{11}&0&0&0\\
0& \frac{1}{2} a_{22}&0&0\\
0&a_{32}&-\frac{1}{2}a_{22}+a_{33}&0\\
d_{41}&d_{42}&d_{43}&d_{44}
\end{array}
\right)
\end{array}$
&
$\begin{array}{ll}
\left(\begin{array}{cccc}
a_{11}&0&0&0\\
a_{21}&a_{22}&0&0\\
a_{31}&a_{32}&a_{33}&0\\
a_{41}&0&0&2d_{11}
\end{array}
\right)
\end{array}$
\\ \hline
$\mathcal{A}s_4^{5}$
&
$\begin{array}{ll}
\left(\begin{array}{cccc}
d_{11}&0&0&0\\
0&\frac{1}{2}a_{22}&0&0\\
0&a_{32}&-\frac{1}{2}a_{22}+a_{33}&0\\
d_{41}&d_{42}&d_{43}&d_{44}
\end{array}
\right)
\end{array}$
&
$\begin{array}{ll}
\left(\begin{array}{cccc}
a_{11}&0&0&0\\
a_{21}&a_{22}&0&0\\
a_{31}&a_{32}&a_{33}&0\\
a_{41}&0&0&2d_{11}
\end{array}
\right)
\end{array}$
\\ \hline
$\mathcal{A}s_4^6$
&
$\begin{array}{ll}
\left(\begin{array}{cccc}
d_{11}&0&0&0\\
0&d_{11}&0&0\\
d_{31}&d_{32}&d_{33}&d_{34}\\
d_{41}&d_{42}&d_{43}&d_{44}
\end{array}
\right)
\end{array}$
&
$\begin{array}{ll}
\left(\begin{array}{cccc}
a_{11}&a_{12}&0&0\\
a_{21}&a_{22}&0&0\\
a_{31}&a_{32}&2d_{11}&0\\
a_{41}&a_{42}&0&2d_{11}
\end{array}
\right)
\end{array}$
\\ \hline
\end{tabular}
\end{table}

\centering
\begin{tabular}{||c||c||c||c||c||c||c||c||c||c||c||c||}
\hline
&$D$&$D^{'}$\\
\hline
$\mathcal{A}s_4^{7}$
&
$\begin{array}{ll}
\left(\begin{array}{cccc}
d_{11}&d_{12}&0&0\\
0&d_{22}&0&0\\
d_{31}&d_{32}&d_{33}&d_{34}\\
d_{41}&d_{42}&d_{43}&d_{44}
\end{array}
\right)
\end{array}$
&
$\begin{array}{ll}
\left(\begin{array}{cccc}
a_{11}&a_{12}&0&0\\
a_{21}&a_{22}&0&0\\
a_{31}&a_{32}&d_{11}+d_{22}&0\\
a_{41}&a_{42}&0&2d_{11}
\end{array}
\right)
\end{array}$
\\ \hline
$\mathcal{A}s_4^{8}$
&
$\begin{array}{ll}
\left(\begin{array}{cccc}
d_{11}&d_{12}&0&0\\
d_{21}&d_{22}&0&0\\
0&0&\frac{1}{2}d_{11}+\frac{1}{2}d_{22}&0\\
d_{41}&d_{42}&d_{43}&d_{44}
\end{array}
\right)
\end{array}$
&
$\begin{array}{ll}
\left(\begin{array}{cccc}
a_{11}&a_{12}&a_{13}&0\\
a_{21}&a_{22}&a_{23}&0\\
a_{31}&a_{32}&a_{33}&0\\
a_{41}&a_{42}&a_{43}&d_{11}+d_{22}
\end{array}
\right)
\end{array}$
\\ \hline
$\mathcal{A}s_4^{9}$
&
$\begin{array}{ll}
\left(\begin{array}{cccc}
d_{11}&d_{12}&0&0\\
0&d_{22}&0&0\\
d_{31}&d_{32}&d_{33}&d_{34}\\
d_{41}&d_{42}&d_{43}&d_{44}
\end{array}
\right)
\end{array}$
&
$\begin{array}{ll}
\left(\begin{array}{cccc}
a_{11}&a_{12}&0&0\\
a_{21}&a_{22}&0&0\\
a_{31}&a_{32}&2d_{22}&0\\
a_{41}&a_{42}&\frac{-2d_{12}}{-1+\alpha}&d_{11}+d_{22}
\end{array}
\right)
\end{array}$
\\ \hline
$\mathcal{A}s_4^{10}$
&
$\begin{array}{ll}
\left(\begin{array}{cccc}
\frac{1}{2}a_{11}&0&0&0\\
a_{21}&-\frac{1}{2}a_{11}+a_{22}&0&a_{24}\\
a_{31}&a_{32}&-\frac{1}{2}a_{11}+a_{33}&a_{34}\\
a_{41}&a_{42}&a_{43}&-\frac{1}{2}a_{11}+a_{44}
\end{array}
\right)
\end{array}$
&
$\begin{array}{ll}
\left(\begin{array}{cccc}
a_{11}&0&0&0\\
a_{21}&a_{22}&a_{23}&a_{24}\\
a_{31}&a_{32}&a_{33}&a_{34}\\
a_{41}&a_{42}&a_{43}&a_{44}
\end{array}
\right)
\end{array}$
\\ \hline
$\mathcal{A}s_4^{11}$
&
$\begin{array}{ll}
\left(\begin{array}{cccc}
\frac{1}{2}a_{11}&0&0&0\\
a_{21}&-\frac{1}{2}a_{11}+a_{22}&a_{23}&0\\
a_{31}&a_{32}&-\frac{1}{2}a_{11}+a_{33}&0\\
a_{41}&0&0&-\frac{1}{2}a_{11}+a_{44}
\end{array}
\right)
\end{array}$
&
$\begin{array}{ll}
\left(\begin{array}{cccc}
a_{11}&0&0&0\\
a_{21}&a_{22}&a_{23}&0\\
a_{31}&a_{32}&a_{33}&0\\
a_{41}&0&0&a_{44}
\end{array}
\right)
\end{array}$
\\ \hline
$\mathcal{A}s_4^{12}$
&
$\begin{array}{ll}
\left(\begin{array}{cccc}
\frac{1}{2}a_{11}&0&0&0\\
a_{21}&-\frac{1}{2}a_{11}+a_{22}&0&0\\
0&0&d_{33}&0\\
a_{41}&a_{42}&-a_{21}&p_1
\end{array}
\right)
\end{array}$
&
$\begin{array}{ll}
\left(\begin{array}{cccc}
a_{11}&0&a_{13}&0\\
a_{21}&a_{22}&a_{23}&0\\
0&0&a_{33}&0\\
a_{41}&a_{42}&a_{43}&p_2
\end{array}
\right)
\end{array}$
\\ \hline
$\mathcal{A}s_4^{13}$
&
$\begin{array}{ll}
\left(\begin{array}{cccc}
\frac{1}{2}a_{11}&0&0&0\\
a_{21}&-\frac{1}{2}a_{11}+a_{22}&0&0\\
0&0&\frac{1}{2}a_{33}&0\\
0&0&a_{43}&-\frac{1}{2}a_{33}+a_{44}
\end{array}
\right)
\end{array}$
&
$\begin{array}{ll}
\left(\begin{array}{cccc}
a_{11}&0&0&0\\
a_{12}&a_{22}&0&0\\
0&0&a_{33}&0\\
0&0&a_{43}&a_{44}
\end{array}
\right)
\end{array}$
\\ \hline
$\mathcal{A}s_4^{14}$
&
$\begin{array}{ll}
\left(\begin{array}{cccc}
\frac{1}{2}a_{11}&0&0&0\\
a_{21}&-\frac{1}{2}a_{11}+a_{22}&0&0\\
d_{31}&d_{32}&d_{33}&d_{34}\\
a_{32}&0&0&p_3
\end{array}
\right)
\end{array}$
&
$\begin{array}{ll}
\left(\begin{array}{cccc}
a_{11}&0&0&0\\
a_{21}&a_{22}&0&0\\
0&a_{32}&a_{33}&a_{21}\\
a_{32}&0&0&p_4
\end{array}
\right)
\end{array}$
\\ \hline
$\mathcal{A}s_4^{15}$
&
$\begin{array}{ll}
\left(\begin{array}{cccc}
\frac{1}{2}a_{11}&0&0&0\\
a_{21}&-\frac{1}{2}a_{11}+a_{22}&a_{23}&0\\
a_{31}&a_{32}&-\frac{1}{2}a_{11}+a_{33}&0\\
a_{41}&0&0&-\frac{1}{2}a_{11}+a_{44}
\end{array}
\right)
\end{array}$
&
$\begin{array}{ll}
\left(\begin{array}{cccc}
a_{11}&0&0&0\\
a_{21}&a_{22}&a_{23}&0\\
a_{31}&a_{32}&a_{33}&0\\
a_{41}&0&0&a_{44}
\end{array}
\right)
\end{array}$
\\ \hline
$\mathcal{A}s_4^{16}$
&
$\begin{array}{ll}
\left(\begin{array}{cccc}
\frac{1}{2}a_{11}&0&0&0\\
a_{21}&-\frac{1}{2}a_{11}+a_{22}&a_{23}&a_{24} \\
a_{31}&a_{32}&-\frac{1}{2}a_{11}+a_{33}&a_{34}\\
a_{41}&a_{42}&a_{43}&-\frac{1}{2}a_{11}+a_{44}
\end{array}
\right)
\end{array}$
&
$\begin{array}{ll}
\left(\begin{array}{cccc}
a_{11}&0&0&0\\
a_{21}&a_{22}&a_{23}&a_{24}\\
a_{31}&a_{32}&a_{33}&a_{34}\\
a_{41}&a_{42}&a_{43} &a_{44}
\end{array}
\right)
\end{array}$
\\ \hline
$\mathcal{A}s_4^{17}$
&
$\begin{array}{ll}
\left(\begin{array}{cccc}
\frac{1}{2}a_{11}&0&0&0\\
a_{21}&-\frac{1}{2}a_{11}+a_{22}&0&0\\
0&0&d_{33}&0\\
a_{41}&a_{42}&p_2
\end{array}
\right)
\end{array}$
&
$\begin{array}{ll}
\left(\begin{array}{cccc}
a_{11}&0&a_{13}&0\\
a_{21}&a_{22}&a_{23}&0\\
0&0&a_{33}&0\\
a_{41}&a_{42}&a_{43}&p_5
\end{array}
\right)
\end{array}$
\\ \hline
$\mathcal{A}s_4^{18}$
&
$\begin{array}{ll}
\left(\begin{array}{cccc}
\frac{1}{2}a_{11}&0&0&0\\
a_{21}&-\frac{1}{2}a_{11}+a_{22}&0&0\\
0&0&d_{33}&0\\
0&0&0&a_{44}-d_{33}
\end{array}
\right)
\end{array}$
&
$\begin{array}{ll}
\left(\begin{array}{cccc}
a_{11}&0&a_{13}&0\\
a_{21}&a_{22}&a_{23}&0\\
0&0&a_{33}&0\\
0&0&a_{43}&a_{44}
\end{array}
\right)
\end{array}$
\\ \hline
\end{tabular}

\newpage
\centering
\begin{tabular}{||c||c||c||c||c||c||c||c||c||c||c||c||}
\hline
&$D$&$D^{'}$\\
			\hline
$\mathcal{A}s_4^{19}$
&
$\begin{array}{ll}
\left(\begin{array}{cccc}
\frac{1}{2}a_{11}&0&0&0\\
0&\frac{1}{2}a_{22}&0&0\\
a_{31}&0&-\frac{1}{2}a_{11}+a_{33}&0\\
0&a_{42}&0&-\frac{1}{2}a_{22}+a_{44}
\end{array}
\right)
\end{array}$
&
$\begin{array}{ll}
\left(\begin{array}{cccc}
a_{11}&0&0&0\\
0&a_{22}&0&0\\
a_{31}&0&a_{33}&0\\
0&a_{42}&0&a_{44}
\end{array}
\right)
\end{array}$
\\ \hline
$\mathcal{A}s_4^{20}$
&
$\begin{array}{ll}
\left(\begin{array}{cccc}
\frac{1}{2}a_{11}&0&0&0\\
0&\frac{1}{2}a_{22}&0&0\\
0&0&\frac{1}{2}a_{33}&0\\
0&0&0&\frac{1}{2}a_{44}
\end{array}
\right)
\end{array}$
&
$\begin{array}{ll}
\left(\begin{array}{cccc}
a_{11}&0&0&0\\
0&a_{22}&0&0\\
0&0&a_{33}&0\\
0&0&0&a_{44}
\end{array}
\right)
\end{array}$
\\ \hline
$\mathcal{A}s_4^{21}$
&
$\begin{array}{ll}
\left(\begin{array}{cccc}
d_{11}&0&0&0\\
d_{21}&d_{22}&0&0\\
d_{31}&d_{21}&-d_{11}+2d_{22}&0\\
d_{41}&d_{42}&d_{43}&d_{44}
\end{array}
\right)
\end{array}$
&
$\begin{array}{ll}
\left(\begin{array}{cccc}
a_{11}&a_{12}&0&0\\
a_{21}&a_{22}&0&0\\
a_{31}&a_{32}&2d_{11}&0\\
a_{41}&a_{42}&2d_{31}&2d_{22}
\end{array}
\right)
\end{array}$
\\ \hline
$\mathcal{A}s_4^{22}$
&
$\begin{array}{ll}
\left(\begin{array}{cccc}
d_{11}&0&0&0\\
0&d_{11}&0&0\\
d_{31}&d_{32}&d_{33}&d_{34}\\
d_{41}&d_{42}&d_{43}&d_{44}
\end{array}
\right)
\end{array}$
&
$\begin{array}{ll}
\left(\begin{array}{cccc}
a_{11}&a_{12}&0&0\\
a_{21}&a_{22}&0&0\\
a_{31}&a_{32}&2d_{11}&0\\
a_{41}&a_{42}&0&2d_{11}
\end{array}
\right)
\end{array}$
\\ \hline
$\mathcal{A}s_4^{23}$
&
$\begin{array}{ll}
\left(\begin{array}{cccc}
d_{11}&0&0&0\\
0&d_{11}&0&0\\
d_{31}&d_{32}&d_{33}&d_{34}\\
d_{41}&d_{42}&d_{43}&d_{44}
\end{array}
\right)
\end{array}$
&
$\begin{array}{ll}
\left(\begin{array}{cccc}
a_{11}&a_{12}&0&0\\
a_{21}&a_{22}&0&0\\
a_{31}&a_{32}&2d_{11}&0\\
a_{41}&a_{42}&0&2d_{11}
\end{array}
\right)
\end{array}$
\\ \hline
$\mathcal{A}s_4^{24}$
&
$\begin{array}{ll}
\left(\begin{array}{cccc}
d_{11}&0&0&0\\
d_{21}&d_{11}&0&0\\
0&0&d_{11}&0\\
d_{41}&d_{42}&d_{43}&d_{44}
\end{array}
\right)
\end{array}$
&
$\begin{array}{ll}
\left(\begin{array}{cccc}
a_{11}&a_{12}&a_{13}&0\\
a_{21}&a_{22}&a_{23}&0\\
a_{31}&a_{32}&a_{33}&0\\
a_{41}&a_{42}&a_{43}&2d_{11}
\end{array}
\right)
\end{array}$
\\ \hline
$\mathcal{A}s_4^{25}$
&
$\begin{array}{ll}
\left(\begin{array}{cccc}
d_{11}&0&0&0\\
d_{21}&d_{22}&0&0\\
d_{31}&d_{32}&d_{33}&d_{34}\\
-\frac{1}{2}a_{34}+\frac{1}{2}d_{21}&0&0&d_{22}
\end{array}
\right)
\end{array}$
&
$\begin{array}{ll}
\left(\begin{array}{cccc}
a_{11}&a_{12}&0&0\\
a_{21}&a_{22}&0&0\\
a_{31}&a_{32}&d_{11}+d_{22}&a_{34}\\
a_{41}&a_{42}&0&2d_{11}
\end{array}
\right)
\end{array}$
\\ \hline
$\mathcal{A}s_4^{26}$
&
$\begin{array}{ll}
\left(\begin{array}{cccc}
\frac{1}{2}a_{11}&0&0&0\\
\frac{1}{2}a_{21}&-\frac{1}{2}a_{11}+a_{22}&0&0\\
a_{31}&0&-\frac{1}{2}a_{11}+a_{33}&a_{34}\\
a_{41}&0&a_{43}&-\frac{1}{2}a_{11}+a_{44}
\end{array}
\right)
\end{array}$
&
$\begin{array}{ll}
\left(\begin{array}{cccc}
a_{11}&0&0&0\\
a_{21}&a_{22}&0&0\\
a_{31}&0&a_{33}&a_{34}\\
a_{41}&0&a_{43}&a_{44}
\end{array}
\right)
\end{array}$
\\ \hline
$\mathcal{A}s_4^{27}$
&
$\begin{array}{ll}
\left(\begin{array}{cccc}
\frac{1}{2}a_{11}&0&0&0\\
\frac{1}{2}a_{21}&-\frac{1}{2}a_{11}+a_{22}&a_{23}&a_{24}\\
\frac{1}{2}a_{31}&a_{32}&-\frac{1}{2}a_{11}+a_{33}&a_{34}\\
\frac{1}{2}a_{41}&a_{42}&a_{43}&-\frac{1}{2}a_{11}+a_{44}
\end{array}
\right)
\end{array}$
&
$\begin{array}{ll}
\left(\begin{array}{cccc}
a_{11}&0&0&0\\
a_{21}&a_{22}&a_{23}&a_{24}\\
a_{31}&a_{32}&a_{33}&a_{34}\\
a_{41}&a_{42}&a_{43}&a_{44}
\end{array}
\right)
\end{array}$
\\ \hline
$\mathcal{A}s_4^{28}$
&
$\begin{array}{ll}
\left(\begin{array}{cccc}
\frac{1}{2}a_{11}&0&0&0\\
\frac{1}{2}a_{21}&-\frac{1}{2}a_{11}+a_{22}&0&0\\
a_{31}&0&-\frac{1}{2}a_{11}+a_{33}&a_{34}\\
a_{41}&0&a_{43}&-\frac{1}{2}a_{11}+a_{44}
\end{array}
\right)
\end{array}$
&
$\begin{array}{ll}
\left(\begin{array}{cccc}
a_{11}&0&0&0\\
a_{21}&a_{22}&0&0\\
a_{31}&0&a_{33}&a_{34}\\
a_{41}&0&a_{43}&a_{44}
\end{array}
\right)
\end{array}$
\\ \hline
$\mathcal{A}s_4^{29}$
&
$\begin{array}{ll}
\left(\begin{array}{cccc}
\frac{1}{2}a_{11}&0&0&0\\
0&\frac{1}{2}a_{11}&0&0\\
0&0&-\frac{1}{2}a_{11}+a_{33}&0\\
a_{41}&0&0&-\frac{1}{2}a_{11}+a_{44}
\end{array}
\right)
\end{array}$
&
$\begin{array}{ll}
\left(\begin{array}{cccc}
a_{11}&0&0&0\\
0&a_{11}&0&0\\
0&0&a_{33}&0\\
a_{41}&0&0&a_{44}
\end{array}
\right)
\end{array}$
\\ \hline
$\mathcal{A}s_4^{30}$
&
$\begin{array}{ll}
\left(\begin{array}{cccc}
\frac{1}{2}a_{11}&0&0&0\\
0&\frac{1}{2}a_{11}&0&0\\
a_{31}&0&-\frac{1}{2}a_{11}+a_{33}&0\\
0&a_{42}&0&-\frac{1}{2}a_{11}+a_{44}
\end{array}
\right)
\end{array}$
&
$\begin{array}{ll}
\left(\begin{array}{cccc}
a_{11}&0&0&0\\
0&a_{11}&0&0\\
a_{31}&0&a_{33}&0\\
0&a_{42}&0&a_{44}
\end{array}
\right)
\end{array}$
\\ \hline
\end{tabular}

%\newpage
%\begin{table}[ht!]
\begin{tabular}{||c||c||c||c||c||c||c||c||c||c||c||c||}
\hline
&$D$&$D^{'}$\\
\hline
$\mathcal{A}s_4^{31}$
&
$\begin{array}{ll}
\left(\begin{array}{cccc}
\frac{1}{2}a_{11}&0&0&0\\
0&\frac{1}{2}a_{11}&0&0\\
a_{31}&-a_{31}&-\frac{1}{2}a_{11}+a_{33}&0\\
a_{41}&0&0&-\frac{1}{2}a_{11}+a_{44}
\end{array}
\right)
\end{array}$
&
$\begin{array}{ll}
\left(\begin{array}{cccc}
a_{11}&0&0&0\\
0&a_{11}&0&0\\
a_{31}&-a_{31}&a_{33}&0\\
a_{41}&0&0&a_{44}
\end{array}
\right)
\end{array}$
\\ \hline
$\mathcal{A}s_4^{32}$
&
$\begin{array}{ll}
\left(\begin{array}{cccc}
0&0&0&0\\
\frac{1}{2}a_{11}&\frac{1}{2}a_{11}&0&0\\
-a_{31}&-a_{31}&-\frac{1}{2}a_{11}+a_{33}&0\\
0&a_{42}&0&-\frac{1}{2}a_{11}+a_{44}
\end{array}
\right)
\end{array}$
&
$\begin{array}{ll}
\left(\begin{array}{cccc}
a_{11}&0&0&0\\
0&a_{11}&0&0\\
a_{31}&-a_{31}&a_{33}&0\\
0&a_{42}&0&a_{44}
\end{array}
\right)
\end{array}$
\\ \hline
$\mathcal{A}s_4^{33}$
&
$\begin{array}{ll}
\left(\begin{array}{cccc}
\frac{1}{2}a_{11}&0&0&0\\
0&\frac{1}{2}a_{22}&0&0\\
0&a_{32}&-\frac{1}{2}a_{22}+a_{33}&a_{32}\\
0&0&0&\frac{1}{2}a_{22}
\end{array}
\right)
\end{array}$
&
$\begin{array}{ll}
\left(\begin{array}{cccc}
a_{11}&0&0&0\\
0&a_{22}&0&0\\
0&a_{32}&a_{33}&a_{32}\\
0&0&0&a_{22}
\end{array}
\right)
\end{array}$
\\ \hline
$\mathcal{A}s_4^{34}$
&
$\begin{array}{ll}
\left(\begin{array}{cccc}
\frac{1}{2}a_{11}&0&0&0\\
0&\frac{1}{2}a_{22}&0&0\\
0&0&\frac{1}{2}a_{33}&0\\
0&0&\frac{1}{2}a_{43}&-\frac{1}{2}a_{33}+a_{44}
\end{array}
\right)
\end{array}$
&
$\begin{array}{ll}
\left(\begin{array}{cccc}
a_{11}&0&0&0\\
0&a_{22}&0&0\\
0&0&a_{33}&0\\
0&0&a_{43}&a_{44}
\end{array}
\right)
\end{array}$
\\ \hline
$\mathcal{A}s_4^{35}$
&
$\begin{array}{ll}
\left(\begin{array}{cccc}
\frac{1}{2}a_{44}&d_{12}&0&0\\
-d_{12}&\frac{1}{2}d_{44}&0&0\\
0&0&\frac{1}{2}d_{44}&0\\
d_{41}&d_{42}&d_{43}&d_{44}
\end{array}
\right)
\end{array}$
&
$\begin{array}{ll}
\left(\begin{array}{cccc}
a_{11}&a_{12}&a_{13}&0\\
a_{21}&a_{22}&a_{23}&0\\
a_{31}&a_{32}&a_{33}&0\\
a_{41}&a_{42}&a_{43}&a_{44}
\end{array}
\right)
\end{array}$
\\ \hline
$\mathcal{A}s_4^{36}$
&
$\begin{array}{ll}
\left(\begin{array}{cccc}
d_{11}&0&0&0\\
d_{21}&d_{22}&0&0\\
d_{31}&d_{21}&-d_{11}+d_{22}&0\\
d_{21}-a_{34}&-d_{31}&p_6&-d_{11}+2d_{22}
\end{array}
\right)
\end{array}$
&
$\begin{array}{ll}
\left(\begin{array}{cccc}
a_{11}&a_{12}&0&0\\
a_{21}&a_{22}&0&0\\
a_{31}&a_{32}&2d_{22}&a_{34}\\
a_{41}&a_{42}&0&2d_{11}
\end{array}
\right)
\end{array}$
\\ \hline
$\mathcal{A}s_4^{37}$
&
$\begin{array}{ll}
\left(\begin{array}{cccc}
d_{11}&0&0&0\\
d_{21}&d_{11}&0&0\\
0&-d_{21}&d_{11}&0\\
d_{41}&d_{42}&d_{43}&d_{44}
\end{array}
\right)
\end{array}$
&
$\begin{array}{ll}
\left(\begin{array}{cccc}
a_{11}&a_{12}&a_{13}&0\\
a_{21}&a_{22}&a_{23}&0\\
a_{31}&a_{32}&a_{33}&0\\
a_{41}&a_{42}&a_{43}&2d_{11}
\end{array}
\right)
\end{array}$
\\ \hline
$\mathcal{A}s_4^{38}$
&
$\begin{array}{ll}
\left(\begin{array}{cccc}
\frac{1}{2}a_{11}&0&0&0\\
a_{21}&-\frac{1}{2}a_{11}+a_{22}&0&0\\
\frac{1}{2}a_{31}&0&-\frac{1}{2}a_{11}+a_{13}&0\\
a_{41}&-\frac{1}{2}a_{31}+a_{42}&a_{21}&p_7
\end{array}
\right)
\end{array}$
&
$\begin{array}{ll}
\left(\begin{array}{cccc}
a_{11}&0&0&0\\
a_{21}&a_{22}&0&0\\
a_{31}&0&a_{33}&0\\
a_{41}&a_{42}&a_{21}&p_8
\end{array}
\right)
\end{array}$
\\ \hline
$\mathcal{A}s_4^{39}$
&
$\begin{array}{ll}
\left(\begin{array}{cccc}
\frac{1}{2}a_{11}&0&0&0\\
\frac{1}{2}a_{21}&-\frac{1}{2}a_{11}+a_{22}&a_{23}&0\\
\frac{1}{2}a_{31}&a_{32}&-\frac{1}{2}a_{11}+a_{33}&0\\
a_{41}&0&0&-\frac{1}{2}a_{11}+a_{44}
\end{array}
\right)
\end{array}$
&
$\begin{array}{ll}
\left(\begin{array}{cccc}
a_{11}&0&0&0\\
a_{21}&a_{22}&a_{23}&0\\
a_{31}&a_{32}&a_{33}&0\\
a_{41}&0&0&a_{44}
\end{array}
\right)
\end{array}$
\\ \hline
$\mathcal{A}s_4^{40}$
&
$\begin{array}{ll}
\left(\begin{array}{cccc}
\frac{1}{2}a_{11}&0&0&0\\
\frac{1}{2}a_{21}&-\frac{1}{2}a_{11}+a_{22}&a_{23}&0\\
\frac{1}{2}a_{31}&a_{32}&-\frac{1}{2}a_{11}+a_{33}&0\\
a_{41}&0&0&-\frac{1}{2}a_{11}+a_{44}
\end{array}
\right)
\end{array}$
&
$\begin{array}{ll}
\left(\begin{array}{cccc}
a_{11}&0&0&0\\
a_{21}&a_{22}&a_{23}&0\\
a_{31}&a_{32}&a_{33}&0\\
a_{41}&0&0&a_{44}
\end{array}
\right)
\end{array}$
\\ \hline
$\mathcal{A}s_4^{41}$
&
$\begin{array}{ll}
\left(\begin{array}{cccc}
\frac{1}{2}a_{11}&0&0&0\\
\frac{1}{2}a_{21}&-\frac{1}{2}a_{11}+a_{22}&0&0\\
0&0&\frac{1}{2}a_{33}&0\\
0&0&\frac{1}{2}a_{43}&-\frac{1}{2}a_{33}+a_{44}
\end{array}
\right)
\end{array}$
&
$\begin{array}{ll}
\left(\begin{array}{cccc}
a_{11}&0&0&0\\
a_{21}&a_{22}&0&0\\
0&0&a_{33}&0\\
0&0&a_{43}&a_{44}
\end{array}
\right)
\end{array}$
\\ \hline
$\mathcal{A}s_4^{42}$
&
$\begin{array}{ll}
\left(\begin{array}{cccc}
\frac{1}{2}a_{11}&0&0&0\\
0&\frac{1}{2}a_{11}&0&0\\
\frac{1}{2}a_{31}&0&-\frac{1}{2}a_{11}+a_{33}&0\\
a_{41}&-a_{41}&0&-\frac{1}{2}a_{11}+a_{44}
\end{array}
\right)
\end{array}$
&
$\begin{array}{ll}
\left(\begin{array}{cccc}
a_{11}&0&0&0\\
0&a_{11}&0&0\\
a_{31}&0&a_{33}&0\\
a_{41}&-a_{41}&0&a_{44}
\end{array}
\right)
\end{array}$
\\ \hline
\end{tabular}
%\end{table}

\newpage
%\begin{table}[ht!]
\begin{tabular}{||c||c||c||c||c||c||c||c||c||c||c||c||}
\hline
&$D$&$D^{'}$\\
\hline
$\mathcal{A}s_4^{43}$
&
$\begin{array}{ll}
\left(\begin{array}{cccc}
\frac{1}{2}a_{11}&0&0&0\\
a_{21}&\frac{1}{2}a_{11}+a_{22}&0&0\\
\frac{1}{2}a_{31}&0&\frac{1}{2}a_{11}+a_{33}&0\\
a_{41}&-\frac{1}{2}a_{31}+a_{42}&a_{21}&p_9
\end{array}
\right)
\end{array}$
&
$\begin{array}{ll}
\left(\begin{array}{cccc}
a_{11}&0&0&0\\
a_{21}&a_{22}&0&0\\
a_{31}&0&a_{33}&0\\
a_{41}&a_{42}&a_{21}&p_{10}
\end{array}
\right)
\end{array}$
\\ \hline
$\mathcal{A}s_4^{44}$
&
$\begin{array}{ll}
\left(\begin{array}{cccc}
\frac{1}{2}a_{11}&0&0&0\\
0&\frac{1}{2}a_{11}&0&0\\
\frac{1}{2}a_{31}&0&-\frac{1}{2}a_{11}+a_{33}&0\\
a_{41}&-a_{41}&0&-\frac{1}{2}a_{11}+a_{44}
\end{array}
\right)
\end{array}$
&
$\begin{array}{ll}
\left(\begin{array}{cccc}
a_{11}&0&0&0\\
0&a_{22}&0&0\\
a_{31}&0&a_{33}&0\\
a_{41}&-d_{41}&0&a_{44}
\end{array}
\right)
\end{array}$
\\ \hline
$\mathcal{A}s_4^{45}$
&
$\begin{array}{ll}
\left(\begin{array}{cccc}
\frac{1}{2}a_{11}&0&0&0\\
0&\frac{1}{2}a_{11}&0&0\\
a_{31}&-a_{31}&-\frac{1}{2}a_{11}+a_{33}&0\\
a_{41}&-a_{41}&0&-\frac{1}{2}a_{11}+a_{44}
\end{array}
\right)
\end{array}$
&
$\begin{array}{ll}
\left(\begin{array}{cccc}
a_{11}&0&0&0\\
0&a_{11}&0&0\\
a_{31}&-a_{31}&a_{33}&0\\
a_{41}&-a_{41}&0&a_{44}
\end{array}
\right)
\end{array}$
\\ \hline
$\mathcal{A}s_4^{46}$
&
$\begin{array}{ll}
\left(\begin{array}{cccc}
\frac{1}{2}a_{11}&0&0&0\\
0&\frac{1}{2}a_{11}&0&0\\
a_{31}&-a_{31}&-\frac{1}{2}a_{11}+a_{33}&0\\
0&0&0&-\frac{1}{2}a_{11}+a_{44}
\end{array}
\right)
\end{array}$
&
$\begin{array}{ll}
\left(\begin{array}{cccc}
a_{11}&0&0&0\\
0&a_{11}&0&0\\
a_{31}&-a_{31}&a_{33}&0\\
0&0&0&a_{44}
\end{array}
\right)
\end{array}$
\\ \hline
$\mathcal{A}s_4^{47}$
&
$\begin{array}{ll}
\left(\begin{array}{cccc}
\frac{1}{2}a_{11}&0&0&0\\
0&\frac{1}{2}a_{22}&0&0\\
0&\frac{1}{2}a_{32}&-\frac{1}{2}a_{22}+a_{33}&a_{34}\\
0&\frac{1}{2}a_{42}&a_{43}&-\frac{1}{2}a_{22}+a_{44}
\end{array}
\right)
\end{array}$
&
$\begin{array}{ll}
\left(\begin{array}{cccc}
a_{11}&0&0&0\\
0&a_{22}&0&0\\
0&a_{32}&a_{33}&a_{34}\\
0&a_{42}&a_{43}&a_{44}
\end{array}
\right)
\end{array}$
\\ \hline
$\mathcal{A}s_4^{48}$
&
$\begin{array}{ll}
\left(\begin{array}{cccc}
d_{11}&0&0&0\\
d_{21}&d_{22}&0&0\\
d_{31}&d_{32}&2d_{22}-d_{11}&0\\
d_{41}&d_{42}&d_{43}&d_{44}
\end{array}
\right)
\end{array}$
&
$\begin{array}{ll}
\left(\begin{array}{cccc}
a_{11}&0&0&0\\
a_{21}&2d_{11}&0&0\\
a_{31}&2d_{21}&d_{11}+d_{22}&0\\
a_{41}&2d_{31}&d_{21}+d_{32}&d_{44}
\end{array}
\right)
\end{array}$
\\ \hline
$\mathcal{A}s_4^{49}$
&
$\begin{array}{ll}
\left(\begin{array}{cccc}
d_{11}&0&0&0\\
d_{21}&a_{22}-a_{11}+d_{11}&a_{13}&0\\
0&0&2a_{11}-2d_{11}&0\\
0&0&-a_{21}&a_{22}-d_{11}
\end{array}
\right)
\end{array}$
&
$\begin{array}{ll}
\left(\begin{array}{cccc}
a_{11}&0&a_{31}&0\\
a_{21}&a_{23}&a_{23}&-a_{13}\\
0&0&a_{11}-d_{11}&0\\
0&0&-a_{21}&a_{22}-d_{11}
\end{array}
\right)
\end{array}$
\\ \hline
$\mathcal{A}s_4^{50}$
&
$\begin{array}{ll}
\left(\begin{array}{cccc}
\frac{1}{2}a_{11}&0&0&0\\
\frac{1}{2}a_{21}&-\frac{1}{2}a_{11}+a_{22}&0&0\\
a_{31}&0&-\frac{1}{2}a_{11}+a_{33}&0\\
\frac{1}{2}a_{41}&-\frac{1}{2}a_{21}a_{42}&0&2a_{22}-a_{11}
\end{array}
\right)
\end{array}$
&
$\begin{array}{ll}
\left(\begin{array}{cccc}
a_{11}&0&0&0\\
a_{21}&a_{22}&0&0\\
a_{31}&0&a_{33}&0\\
a_{41}&a_{42}&0&2a_{22}-a_{11}
\end{array}
\right)
\end{array}$
\\ \hline
$\mathcal{A}s_4^{51}$
&
$\begin{array}{ll}
\left(\begin{array}{cccc}
\frac{1}{2}a_{11}&0&0&0\\
\frac{1}{2}a_{21}&-\frac{1}{2}a_{11}+a_{22}&a_{23}&a_{24}\\
\frac{1}{2}a_{31}&a_{32}&-\frac{1}{2}a_{11}+a_{33}&a_{34}\\
\frac{1}{2}a_{41}&a_{42}&a_{43}&-\frac{1}{2}a_{11}+a_{44}
\end{array}
\right)
\end{array}$
&
$\begin{array}{ll}
\left(\begin{array}{cccc}
a_{11}&0&0&0\\
a_{21}&a_{22}&a_{23}&a_{24}\\
a_{31}&a_{32}&a_{33}&a_{34}\\
a_{41}&a_{42}&a_{43}&a_{44}
\end{array}
\right)
\end{array}$
\\ \hline
$\mathcal{A}s_4^{52}$
&
$\begin{array}{ll}
\left(\begin{array}{cccc}
\frac{1}{2}a_{11}&0&0&0\\
\frac{1}{2}a_{21}&-\frac{1}{2}a_{11}+a_{22}&0&0\\
a_{31}&0&-\frac{1}{2}a_{11}+a_{33}&0\\
\frac{1}{2}a_{41}&-\frac{1}{2}a_{21}+a_{42}&0&-\frac{3}{2}a_{11}+2a_{22}
\end{array}
\right)
\end{array}$
&
$\begin{array}{ll}
\left(\begin{array}{cccc}
a_{11}&0&0&0\\
a_{21}&a_{22}&0&0\\
a_{31}&0&a_{33}&0\\
a_{41}&a_{42}&0&2a_{22}-a_{11}
\end{array}
\right)
\end{array}$
\\ \hline
$\mathcal{A}s_4^{53}$
&
$\begin{array}{ll}
\left(\begin{array}{cccc}
\frac{1}{2}a_{11}&0&0&0\\
0&\frac{1}{2}a_{22}&0&0\\
0&\frac{1}{2}a_{32}&-\frac{1}{2}a_{22}+a_{33}&0\\
0&\frac{1}{2}a_{42}&-\frac{1}{2}a_{32}+a_{43}&-\frac{3}{2}a_{22}+2a_{33}
\end{array}
\right)
\end{array}$
&
$\begin{array}{ll}
\left(\begin{array}{cccc}
a_{11}&0&0&0\\
0&a_{22}&0&0\\
0&a_{32}&a_{33}&0\\
0&a_{42}&a_{43}&\frac{1}{2}a_{11}
\end{array}
\right)
\end{array}$
\\ \hline
$\mathcal{A}s_4^{54}$
&
$\begin{array}{ll}
\left(\begin{array}{cccc}
\frac{1}{2}a_{11}&0&0&0\\
0&\frac{1}{2}a_{11}&0&0\\
0&0&\frac{1}{2}a_{11}&0\\
0&0&0&\frac{1}{2}a_{11}
\end{array}
\right)
\end{array}$
&
$\begin{array}{ll}
\left(\begin{array}{cccc}
a_{11}&0&0&0\\
0&a_{11}&0&0\\
0&0&a_{11}&0\\
0&0&0&\frac{1}{2}a_{11}
\end{array}
\right)
\end{array}$
\\ \hline
\end{tabular}
%\end{table}

%\begin{table}[ht!]
\begin{tabular}{||c||c||c||c||c||c||c||c||c||c||c||c||}
\hline
&$D$&$D^{'}$\\
\hline
$\mathcal{A}s_4^{55}$
&
$\begin{array}{ll}
\left(\begin{array}{cccc}
\frac{1}{2}a_{11}&0&0&0\\
\frac{1}{2}a_{21}&-\frac{1}{2}a_{11}+a_{22}&0&0\\
\frac{1}{2}a_{31}&a_{32}&-\frac{1}{2}a_{11}+a_{33}&0\\
\frac{1}{2}a_{41}&-\frac{1}{2}a_{21}+a_{42}&a_{43}&-\frac{3}{2}a_{11}+2a_{22}
\end{array}
\right)
\end{array}$
&
$\begin{array}{ll}
\left(\begin{array}{cccc}
a_{11}&0&0&0\\
a_{21}&a_{22}&0&0\\
a_{31}&a_{32}&a_{33}&0\\
a_{41}&a_{42}&a_{43}&-a_{11}+2a_{22}
\end{array}
\right)
\end{array}$
\\ \hline
$\mathcal{A}s_4^{56}$
&
$\begin{array}{ll}
\left(\begin{array}{cccc}
\frac{1}{2}a_{11}&0&0&0\\
0&-\frac{1}{2}a_{11}+a_{22}&a_{23}&0\\
0&a_{32}&-\frac{1}{2}a_{11}+a_{33}&0\\
\frac{1}{2}a_{41}&a_{42}&a_{43}&p_{11}
\end{array}
\right)
\end{array}$
&
$\begin{array}{ll}
\left(\begin{array}{cccc}
a_{11}&0&0&0\\
0&a_{22}&a_{23}&0\\
0&a_{32}&a_{33}&0\\
a_{41}&a_{42}&a_{43}&-a_{11}+\frac{1}{2}a_{41}
\end{array}
\right)
\end{array}$
\\ \hline
$\mathcal{A}s_4^{57}$
&
$\begin{array}{ll}
\left(\begin{array}{cccc}
\frac{1}{2}a_{11}&0&0&0\\
\frac{1}{2}a_{21}&-\frac{1}{2}a_{11}+a_{22}&0&0\\
\frac{1}{2}a_{31}&-\frac{1}{2}a_{21}+a_{32}&-\frac{3}{2}a_{11}+2a_{22}&0\\
\frac{1}{2}a_{41}&-\frac{1}{2}a_{31}+a_{42}&-\frac{3}{2}a_{21}+2a_{32}&-\frac{5}{2}a_{11}+3a_{22}
\end{array}
\right)
\end{array}$
&
$\begin{array}{ll}
\left(\begin{array}{cccc}
a_{11}&0&0&0\\
a_{21}&a_{22}&0&0\\
a_{31}&a_{32}&t_1&0\\
a_{41}&a_{44}&t_2&t_3
\end{array}
\right)
\end{array}$
\\ \hline
$\mathcal{A}s_4^{58}$
&
$\begin{array}{ll}
\left(\begin{array}{cccc}
\frac{1}{2}a_{11}&0&0&0\\
\frac{1}{2}a_{21}&k_1&0&0\\
\frac{1}{2}a_{31}&a_{32}&k_3&0\\
\frac{1}{2}a_{41}&k_2&k_4&k_5
\end{array}
\right)
\end{array}$
&
$\begin{array}{ll}
\left(\begin{array}{cccc}
a_{11}&0&0&0\\
a_{21}&a_{22}&0&0\\
a_{31}&a_{32}&a_{22}-2a_{32}&0\\
a_{41}&a_{42}&a_{43}&k_6
\end{array}
\right)
\end{array}$
\\ \hline
\end{tabular}
%\end{table}

\begin{rem}
\noindent In the above-displayed tables, the following notations are used :
\begin{itemize}
\item $p_1=-a_{11}+a_{22}+d_{33},\quad p_2=-\frac{1}{2}a_{11}+a_{22}+d_{33},\quad p_3=\frac{1}{2}a_{11}-a_{22}+a_{33},\, p_4=a_{11}+a_{22}+a_{33}$
\item $p_5=-\frac{1}{2}a_{11}+a_{22}+d_{33},\, p_6=d_{11}+d_{21}-d_{22},\, p_7=-\frac{3}{2}a_{11}+a_{22}+a_{33},\, p_8=-a_{11}+a_{22}+a_{33}$
\item $ p_9=-\frac{3}{2}a_{11}+a_{22}+a_{33},\,p_{10}=a_{22}+a_{11}+a_{33},
\, p_{11}=-a_{11}+\frac{1}{2}a_{41}$
	\item $t_1=-a_{11}-2a_{22},\quad t_{2}=-a_{21}+2a_{32},\quad t_{3}=2a_{11}+a_{22}.$
	\item $k_1=-\frac{1}{2}a_{11}+a_{22},\quad
 k_{2}=\frac{1}{2}a_{21}-\frac{a_{31}}{2}+a_{42}, \quad k_{3}=-\frac{1}{2}a_{11}+a_{22}-2a_{32}$
 \item$ k_{4}=-\frac{3a_{21}}{2}+d_{43},\quad k_5=-\frac{3a_{11}}{2}+2a_{22}-2a_{32},\quad k_6=-a_{11}+2a_{22}-2a_{32}.$
 \item$IC$: Class isomorphism.
\end{itemize}
\end{rem}
%\newpage

\end{document}